\documentclass[11pt]{article}
\usepackage[utf8]{inputenc}  
\usepackage[T1]{fontenc} 
\usepackage{lmodern}
\usepackage{amsmath} 
\usepackage{amssymb}
\usepackage{amsthm} 
\usepackage{mathrsfs}
\usepackage{graphicx}
\usepackage{ccaption}
\usepackage{hyperref}
\usepackage{listings}
\usepackage{enumitem}
\usepackage{tikz}
\usepackage{float}
\usepackage{pgfplots}
\pgfplotsset{compat=newest}
\usetikzlibrary{shapes}
\usetikzlibrary{patterns}
\usepackage{stmaryrd}
\usepackage{authblk}
\usepackage[pagewise]{lineno}
\usepackage{color}
\usepackage{csquotes}
\usepackage{bm}
\usepackage{multicol}  

\usepackage[left=2.5cm,right=2.5cm,top=3cm,bottom=3cm]{geometry}

\theoremstyle{definition}
\newtheorem{defi}{Definition}
\newtheorem{theorem}{Theorem}
\newtheorem{proposition}{Proposition}

\newtheorem{rem}{Remark}

\newcommand{\ub}{\bm{u}}

\newcommand{\vb}{\bm{v}}

\newcommand{\sib}{\bm{\sigma}}
\newcommand{\Xb}{\bm{X}}

\newcommand{\Yb}{\bm{Y}}

\newcommand{\fb}{\bm{f}}
\newcommand{\bb}{\bm{b}}
\newcommand{\gb}{\bm{g}}

\def\top{\text{top}}
\def\bot{\text{bot}}

\usepackage{fancybox}
\usepackage{bmpsize} \usepackage{pstricks-add}
 \usepackage{mathrsfs}\usetikzlibrary{arrows}
\usepackage{amsmath,esint}
\usepackage{amsfonts}\usepackage{amsthm} \usepackage{euscript} \usepackage{stmaryrd}
\usepackage{braket}
\PassOptionsToPackage{hyphens}{url}
 \usepackage{breakurl}\usepackage{enumitem}
 \usepackage{calligra} \usepackage{dsfont}

\MakeInnerQuote{"}
\setitemize[1]{ label=\textbullet}
\setitemize[2]{ label=$-$}

\title{Asymptotic analysis of Rayleigh-Lamb dispersion relations under various boundary conditions}
\author[1,*]{Angèle Niclas}
\author[2]{Titouan Rocabois}
\affil[1]{Université Paris Cité, CNRS, MAP5, F-75006 Paris, France}
\affil[2]{ENS de Lyon, F-69007 Lyon, France}
\affil[*]{Corresponding author: angele.niclas@math.cnrs.fr}
\date{}

\begin{document}
\maketitle

\begin{abstract}
We investigate the asymptotic behaviour of the complex roots of
Rayleigh-Lamb dispersion relations arising in elastic waveguides.
We consider Neumann, Dirichlet, and fluid boundary conditions and derive
explicit asymptotic expansions of the associated wavenumbers as their
modulus tends to infinity. In the Neumann case, we provide a rigorous
justification of asymptotic formulas that have long been used in the
literature without proof, together with higher-order terms and explicit
constants. Similar results are obtained for Dirichlet and fluid boundary
conditions. The analysis relies on a reformulation of the dispersion relations as
zeros of holomorphic functions and on asymptotic properties of the
Lambert \(W\) function. We also show how these asymptotic expansions can
be used to establish modal decompositions and well-posedness results for
elastic waveguide problems. Numerical experiments confirm the accuracy
of the proposed formulas.
\end{abstract}

\section{Introduction}

The main objective of this paper is to provide a rigorous proof of asymptotic formulas that are used, explicitly or implicitly, in the study of elastic plates and Lamb modes. More precisely, we investigate the behaviour of the wavenumbers $k$ associated with guided elastic waves in a plate in the large-wavenumber regime, namely as $|k|\to\infty$.

\subsection{Lamb waves and associated wavenumbers} 

From a physical point of view, when considering wave propagation in an elastic layer, it is natural to look for time-harmonic waves propagating along the waveguide direction $e_x$ \cite{achenbach1,royer1}. In a two-dimensional setting, such waves are typically sought under the form
\begin{equation}
\ub(x,y,t) = (u(y),v(y))\exp\bigl(i(kx-\omega t)\bigr),
\end{equation}
where $\omega$ denotes the frequency and $k$ the wavenumber. Substituting this ansatz into the elastodynamic equations leads to the classical Rayleigh-Lamb dispersion relations, which provide an implicit relation between $\omega$, $k$, the geometry of the layer, and the Lamé coefficients of the material \cite{achenbach1,fraser1}. Once the admissible values of $k$ have been determined, the corresponding transverse profiles $(u(y),v(y))$, called Lamb modes, can be computed, yielding the theoretical expression of the propagating waves in the waveguide \cite{achenbach1,balogun1}. 

From a mathematical point of view, the search for propagating modes can be formulated as a spectral problem \cite{pagneux2,bonnetier3,niclas4}. The associated eigenvalues determine the admissible wavenumbers $k$, while the corresponding eigenmodes provide the transverse displacement profiles $u(y)$ and $v(y)$. Moreover, it has long been conjectured \cite{besserer1,kirrmann1}, and has recently been established in appropriate functional settings \cite{akian1}, that these modes form a complete family. This completeness property makes it possible to represent sufficiently general elastic fields in the waveguide as infinite sums of Lamb modes \cite{pagneux1,bonnetier3}.

\subsection{Theoretical and numerical motivations}

In this paper, we focus on the asymptotic behaviour of these wavenumbers, or equivalently of the associated eigenvalues, in the regime where $|k|$ is large. This behaviour is crucial for understanding the tail of modal expansions, and has consequences in at least two directions.

The first one is theoretical. Precise information on the large-wavenumber regime is needed in order to establish well-posedness results for elastic plates with prescribed source terms or boundary data \cite{bonnetier3,niclas4}. To the best of our knowledge, such results cannot be obtained directly by standard Lax-Milgram arguments or by elementary Fredholm alternatives, due to the structure of the guided-wave formulation and the nature of the modal decomposition \cite{baronian1,baronian2}. A possible strategy for proving existence is to construct solutions through modal expansions. In this approach, the decay of the modal coefficients and the asymptotic distribution of the large wavenumbers must be controlled precisely in order to prove that the resulting solution belongs to the desired Sobolev space \cite{bonnetier3}.

The second motivation is numerical. Computing the roots of the Rayleigh-Lamb dispersion relations can be delicate, especially in the large-wavenumber regime \cite{pusenjak1}. However, accurate simulations of wave propagation in elastic layers often require a precise approximation of the modal expansion, in particular near sources or boundary singularities, where the contribution of high-order modes may not be negligible \cite{castaings1}. Several numerical strategies, such as SAFE methods, have been developed to avoid a direct root-by-root computation of the dispersion relation \cite{pagneux3,hayashi1,gravenkamp1}. Nevertheless, a rigorous asymptotic expansion of the large wavenumbers, together with explicit and controlled error bounds, would make it possible to replace the computation of high-order roots by explicit asymptotic formulas beyond a suitable threshold.

For these reasons, it is essential to obtain precise asymptotic estimates for the admissible wavenumbers.

\subsection{Contributions and outline of the paper}

The present work addresses two limitations of the existing asymptotic theory. Early asymptotic formulas were given in \cite{merkulov1}. However, the article is very difficult to find online, and very concise: the formulas are stated without proofs, and only the leading-order behaviour is provided. In particular, the estimates do not include explicit constants in the remainder terms, which limits their direct use for certified numerical approximations. Despite this lack of a complete proof, these asymptotic formulas have been widely used as a reference point in the Lamb-wave community \cite{akian1,bonnet3,pagneux3,zakharov1,bonnetier3}. One of the purposes of the present work is therefore to provide a rigorous derivation of these asymptotic behaviours, with higher-order terms and explicit error estimates.

Another limitation of the existing asymptotic theory is that it mainly concerns elastic waveguides with free boundary conditions. Yet in many applications, and especially in industrial configurations, boundary conditions may be more complex. Clamped boundaries \cite{alshits1,yoon1,xu1} or boundaries coupled with an acoustic fluid \cite{inoue1,kauffmann1} arise naturally in realistic models. At present, there is no unified theoretical framework allowing one to treat these cases at the same level of precision as the classical free-boundary case. In this paper, we extend the asymptotic analysis to clamped and fluid-loaded elastic waveguides, thereby covering a broader range of physically relevant situations.

The article is organized as follows. In Section~2, we recall, in the case of Neumann boundary conditions, the mathematical framework leading to the relevant eigenvalue problem and to the associated family of wavenumbers. We then extend this formulation to clamped boundary conditions and to fluid-loaded boundaries. In Section~3, we prove the large-eigenvalue asymptotic expansions in each boundary configuration and validate the estimates numerically. Finally, in Section~4, we use these asymptotic results to establish well-posedness theorems in the two cases that, to the best of our knowledge, were not previously covered: clamped elastic waveguides and fluid-loaded elastic waveguides.

\section{Spectral problem associated to Lamb modes}

In this section, we introduce the Lamb operators that arise naturally when studying the elastodynamic equations in elastic waveguides. A standard approach consists in diagonalizing these operators in order to express the displacement field as an expansion over Lamb modes. Each Lamb mode is associated with a spectral parameter, referred to here as a Lamb eigenvalue or, equivalently, as an admissible wavenumber. We also recall the corresponding dispersion relations satisfied by these wavenumbers, which will be the starting point for the asymptotic analysis carried out in the following sections.

Throughout this paper, we restrict ourselves to isotropic elastic waveguides. We consider three types of boundary conditions, chosen because they cover the most common configurations encountered in applications. Nevertheless, the approach developed below is not specific to these cases and can be adapted, with minor modifications, to other boundary conditions when needed.

\subsection{Neumann boundary conditions}

We consider a two-dimensional infinite straight elastic waveguide $
\Omega=\{(x,z)\in \mathbb{R}\times(-h,h)\}
$ of height \(2h>0\) and density \(\rho>0\). The displacement field in the
waveguide is denoted by \(\ub=(u,v)\). Given a frequency \(\omega\in\mathbb{R}\)
and Lamé parameters \((\lambda,\mu)\), the wavefield \(\ub\) in the time-harmonic regime satisfies the elastodynamic equation
\begin{equation} \label{3_lamb1}
\nabla\cdot\sib(\ub)+\rho\omega^2\ub=-\fb
\qquad \text{in } \Omega,
\end{equation}
where \(\fb=(f_1,f_2)\) is a given source termn and the stress tensor
\(\sib(\ub)\) is given by
\begin{equation}
\sib(\ub)=
\begin{pmatrix}
(\lambda+2\mu)\partial_x u+\lambda\partial_z v
&
\mu(\partial_z u+\partial_x v)
\\
\mu(\partial_z u+\partial_x v)
&
\lambda\partial_x u+(\lambda+2\mu)\partial_z v
\end{pmatrix}
:=
\begin{pmatrix}
s & t\\
t & r
\end{pmatrix}.
\end{equation}

In this subsection, we assume that Neumann boundary conditions are imposed on
both boundaries of the waveguide:
\begin{equation} \label{3_neumann}
\sib(\ub)\cdot\bm{\nu}=\bb^\top
\qquad \text{on } \partial\Omega_\top,
\qquad
\sib(\ub)\cdot\bm{\nu}=\bb^\bot
\qquad \text{on } \partial\Omega_\bot,
\end{equation}
where \(\bb^\top=(b_1^\top,b_2^\top)\) and
\(\bb^\bot=(b_1^\bot,b_2^\bot)\) are given boundary source terms, and
\(\bm{\nu}\) denotes the outward unit normal vector to \(\Omega\).
The geometry is shown in Figure~\ref{3_guide2D}.

\begin{figure}[h]
\begin{center}
\begin{tikzpicture}[scale=0.8]
\draw (-5.5,0.7) -- (5,0.7);
\draw (-5.5,-0.7) -- (5,-0.7);
\draw [white,fill=red!10] (-5.5,0.7) -- (5,0.7) -- (5,-0.7) --(-5.5,-0.7) -- cycle;
\draw [white,fill=gray!40] (0,0.2) circle (0.4);
\draw (0,0.2) node{$\fb$};
\draw (4,0) node{$\Omega$};
\draw (5.1,0.7) node[right]{$\partial \Omega_\top$};
\draw (5.1,-0.7) node[right]{$\partial \Omega_\bot$};
\draw (-5.5,0.7) node[left]{$h$}; 
\draw (-5.5,-0.7) node[left]{$-h$}; 
\draw [ultra thick][->] (-5,-0.3)--(-4.3,-0.3) node[right]{$e_x$}; 
\draw [ultra thick][->] (-5,-0.3)--(-5,0.4) node[right]{$e_z$}; 
\draw [white,fill=gray!40] (-3.5,0.6)--(-3.5,0.8)--(-2.5,0.8)--(-2.5,0.6)--(-3.5,0.6); 
\draw (-3,0.8) node[above]{$\bb^\top$}; 
\draw [white,fill=gray!40] (-1,-0.6)--(-1,-0.8)--(2,-0.8)--(2,-0.6)--(-1,-0.6); 
\draw (0.5,-0.8) node[below]{$\bb^\bot$}; 
\end{tikzpicture}
\end{center}\vspace{-8mm}
\caption{\label{3_guide2D} Parametrization of a two-dimensional waveguide \(\Omega\) with Neumann boundary conditions. Elastic wavefields are generated by an internal source term \(\fb\) and by boundary source terms \(\bb^\top\) and \(\bb^\bot\).}
\end{figure}

In~\cite{maupin1}, this equation is studied in operator form,
\[
\bm{Z}=\mathcal{L}(\bm{Z}),
\qquad \text{with } \bm{Z}=(u,t,-s,v).
\]
This approach was later adapted in~\cite{pagneux1} to introduce the so-called
\(\Xb/\Yb\) formulation. We define the variables
\begin{equation}\label{3_XY}
\Xb = u\,e_x+t\,e_z,
\qquad
\Yb = -s\,e_x+v\,e_z,
\end{equation}
in terms of which the elasticity equation can be recast as follows:
\begin{proposition}\label{3_equivalenceXY}
The system \eqref{3_lamb1}, with the Neumann boundary conditions \eqref{3_neumann}, is equivalent to 
\begin{equation}\label{3_eqXY}
 \partial_x\left(\begin{array}{c} \bm{X} \\ \bm{Y} \end{array}\right)=
\mathcal{L}(\Xb,\Yb)
+\left(\begin{array}{c} 0 \\ -f_2 -b_2^\top \delta_{z=h}-b_2^\bot\delta_{z=-h} \\ f_1+ b_1^\top \delta_{z=h}+b_1^\bot\delta_{z=-h} \\ 0 \end{array} \right) \quad \text{ in } \Omega,
\end{equation}
with the boundary condition $B_1(\bm{X})=B_2(\bm{Y})=0$, where $\mathcal{L}(\Xb,\Yb) = ( F(\bm{Y}), G(\bm{X}))$
and 
$F$, $G$, $B_1$ and $B_2$ are differential matrix operators defined by 
\begin{equation}\label{3_FG}
F=\left(\begin{array}{cc} -\displaystyle\frac{1}{\lambda+2\mu} & -\displaystyle\frac{\lambda}{\lambda+2\mu}\partial_z \\ \displaystyle\frac{\lambda}{\lambda+2\mu}\partial_z & -\rho\omega^2-\displaystyle\frac{4\mu(\lambda+\mu)}{\lambda+2\mu}\partial^2_{zz} \end{array}\right), \qquad G=\left(\begin{array}{cc}
\rho\omega^2 & \partial_z \\ -\partial_z & \displaystyle\frac{1}{\mu} 
\end{array}\right),
\end{equation}
\begin{equation}\label{3_B1B2}
B_1(\bm{X})=\bm{X}\cdot e_z, \qquad B_2(\bm{Y})=-\frac{\lambda}{\lambda+2\mu}\bm{Y}\cdot e_x +\frac{4\mu(\lambda+\mu)}{\lambda+2\mu}\partial_z\bm{Y}\cdot e_z. 
\end{equation}
\end{proposition}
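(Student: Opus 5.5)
The plan is to verify the equivalence by direct algebraic manipulation of the constitutive law, the equilibrium equations and the boundary conditions. The elastodynamic system \eqref{3_lamb1} consists of two equilibrium equations and three constitutive relations. The unknowns $\Xb=(u,t)$ and $\Yb=(-s,v)$ are exactly those quantities whose $x$-derivatives can be expressed using only $z$-derivatives of the unknowns. The proof therefore has two parts. First, express $\partial_x u,\partial_x t,\partial_x s,\partial_x v$ in terms of $(u,t,s,v)$ and their $z$-derivatives. Second, show that the boundary conditions and source terms are encoded correctly.

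\emph{Step 1 (constitutive relations).}
\begin{itemize}
\item From $t=\mu(\partial_z u+\partial_x v)$ we get $\partial_x v=-\partial_z u+t/\mu$. This is the second row of $G(\Xb)$, i.e.\ the $e_z$-component of $\partial_x\Yb$.
\item From $s=(\lambda+2\mu)\partial_x u+\lambda\partial_z v$ we get
\[
\partial_x u=\frac{s}{\lambda+2\mu}-\frac{\lambda}{\lambda+2\mu}\partial_z v .
\]
Since $s=-\Yb\cdot e_x$, this is the first row of $F(\Yb)$.
\item We also record $r=\lambda\partial_x u+(\lambda+2\mu)\partial_z v$. Substituting the expression for $\partial_x u$ gives
\[
r=\frac{\lambda}{\lambda+2\mu}s+\frac{4\mu(\lambda+\mu)}{\lambda+2\mu}\partial_z v ,
\]
using the identity $(\lambda+2\mu)^2-\lambda^2=4\mu(\lambda+\mu)$.
\end{itemize}

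\emph{Step 2 (equilibrium equations).}
\begin{itemize}
\item The first component reads $\partial_x s+\partial_z t+\rho\omega^2u=-f_1$. Hence $\partial_x(-s)=\rho\omega^2u+\partial_z t+f_1$, which is the first row of $G(\Xb)$ plus the source $f_1$.
\item The second component reads $\partial_x t+\partial_z r+\rho\omega^2 v=-f_2$. Inserting the expression for $r$ from Step 1 gives
\[
\partial_x t=-\frac{\lambda}{\lambda+2\mu}\partial_z s-\rho\omega^2 v-\frac{4\mu(\lambda+\mu)}{\lambda+2\mu}\partial^2_{zz}v-f_2 .
\]
Writing $-\partial_z s=\partial_z(\Yb\cdot e_x)$, this is the second row of $F(\Yb)$ minus $f_2$.
\end{itemize}
Conversely, the same identities read backwards recover \eqref{3_lamb1} from \eqref{3_eqXY} in the interior. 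Here $s,t$ are defined by the components of $\Xb,\Yb$, and the constitutive laws follow from the first rows.

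\emph{Step 3 (boundary conditions and Dirac terms).} This is the main delicate point. On $z=\pm h$ we have $\bm\nu=\pm e_z$, so the Neumann condition \eqref{3_neumann} prescribes $(t,r)=\pm\bb$. The homogeneous conditions $B_1(\Xb)=t=0$ and $B_2(\Yb)=-\frac{\lambda}{\lambda+2\mu}(-s)\cdot(-1)\ldots$ correspond exactly to $t=0$ and $r=0$; the sign conventions must be checked carefully using the formula for $r$ in Step 1.

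To handle inhomogeneous data, I would interpret \eqref{3_eqXY} in the distributional sense in $z$. Extend $t$ and $r$ by zero outside $(-h,h)$, so that $\Xb$ satisfies $B_1(\Xb)=0$ and $\Yb$ satisfies $B_2(\Yb)=0$ in the trace sense. The distributional derivatives $\partial_z t$ and $\partial_z r$ then produce jumps $\mp t(\pm h)\delta_{z=\pm h}$, and similarly for $r$. These jumps are exactly the terms $b_1\delta$ and $b_2\delta$ appearing in \eqref{3_eqXY}. Equivalently, one tests the equations against smooth functions $\varphi(z)$ and integrates by parts on $(-h,h)$. The boundary terms $[t\varphi]_{-h}^{h}$ and $[r\varphi]_{-h}^{h}$ are replaced by the data $\bb^\top,\bb^\bot$ through the Neumann condition, which yields the Dirac contributions with the signs shown.

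The converse direction proceeds by reading off the Dirac masses to recover the Neumann traces. The main obstacle is the bookkeeping of signs and normals in this step. Once the weak formulation in $z$ is written, both directions reduce to the same integration-by-parts identity.
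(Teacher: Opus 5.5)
Your proposal is correct and follows essentially the same direct verification as the proof the paper defers to. You rewrite the constitutive law and the two equilibrium equations row by row, check that $B_1(\Xb)=t$ and $B_2(\Yb)=r$, and absorb the Neumann traces into Dirac masses by integrating by parts in $z$.

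Step 3 needs tidying in three places:
\begin{itemize}
\item The garbled line should read $B_2(\Yb)=\frac{\lambda}{\lambda+2\mu}s+\frac{4\mu(\lambda+\mu)}{\lambda+2\mu}\partial_z v=r$, with no extra factor $-1$.
\item The signs of the Dirac terms do come out as stated, because $(t,r)(\cdot,h)=\bb^{\top}$ and $(t,r)(\cdot,-h)=-\bb^{\bot}$; you should write this check out rather than defer it.
\item Zero-extending $t$ and $r$ does not make their inner traces vanish, so $B_1(\Xb)=B_2(\Yb)=0$ should be read as the natural boundary conditions of the weak formulation in $z$.
\end{itemize}
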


The proof of this proposition can be found in~\cite{bonnetier3}.
In this formulation, the operators \(F\) and \(G\) depend only on the transverse
variable \(z\) and are defined on a single cross-section of the waveguide,
whereas derivatives with respect to \(x\) appear only on the left-hand side
of~\eqref{3_eqXY}. We consider the space
\begin{equation}
H_{\mathrm{neum}}
:=
\left\{
(\Xb,\Yb)\in \bigl(H^2(-h,h)\bigr)^4
\,\middle|\,
B_1(\Xb)(\pm h)=B_2(\Yb)(\pm h)=0
\right\},
\end{equation}
and the operator
\begin{equation}
\mathcal{L}_{\mathrm{neum}}:
\begin{array}{rcl}
H_{\mathrm{neum}} & \longrightarrow & \bigl(L^2(-h,h)\bigr)^4, \\[0.2em]
(\Xb,\Yb) & \longmapsto & \bigl(F(\Yb),G(\Xb)\bigr).
\end{array}
\end{equation}
To diagonalize this operator, it is customary to introduce the Lamb modes:

\begin{defi}\label{3_lambmode}
A Lamb mode with Neumann boundary conditions is a non-trivial 
\((\Xb,\Yb)\in H_{\mathrm{neum}}\) associated with a wavenumber
\(k\in\mathbb{C}\), satisfying
\[
\mathcal{L}_{\mathrm{neum}}(\Xb,\Yb)=ik(\Xb,\Yb).
\]
\end{defi}

These eigenmodes do not form a Hilbert basis. However, using the formalism
presented above, the following result was proved in~\cite{akian1}:

\begin{theorem}\label{3_th2}
For almost every frequency \(\omega\in\mathbb{R}_+\), the Lamb modes form a
complete set of functions in \(H_{\mathrm{neum}}\).
\end{theorem}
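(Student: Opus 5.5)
The plan is to prove completeness through the resolvent of $\mathcal{L}_{\mathrm{neum}}$, using a Keldysh-type argument for operators with compact resolvent that are "close to selfadjoint" in a weak sense. Throughout, $\omega$ is fixed outside an exceptional set.

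\textbf{Step 1: compact resolvent.} First I would show that $\mathcal{L}_{\mathrm{neum}}-ik$ is invertible for some $k$, and that its inverse is compact on $(L^2(-h,h))^4$. Since $F$ and $G$ are ordinary differential operators on $(-h,h)$, the resolvent can be written with an explicit Green matrix. This amounts to solving the Lamb ODE system in $z$ with Neumann traction conditions. The resulting kernel is a meromorphic function of $k$, and it is singular only at the zeros of the Rayleigh-Lamb determinant $D(k,\omega)$, which splits into its symmetric and antisymmetric parts.

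\textbf{Step 2: growth of the resolvent.} Next I would bound the resolvent norm along rays $\arg k=\theta$, and on suitable circles $|k|=R_n$ that avoid the roots. Using the explicit kernels built from $\sinh(\sqrt{k^2-\omega^2/c_{L,T}^2}\,z)$, I expect $\|(\mathcal{L}-ik)^{-1}\|\lesssim |k|^{N}$ away from the real and imaginary axes. I would also bound the growth of the entire function $D(\cdot,\omega)$: it has order $1$ and finite type, so it is of exponential type.

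\textbf{Step 3: Phragmén--Lindelöf.} Take $w$ orthogonal to all generalized eigenvectors. Then the function $k\mapsto\langle(\mathcal{L}-ik)^{-1}g,w\rangle$ is entire for every $g$, because the principal parts at each pole lie in the span of the root vectors. I would write it as $N(k)/D(k)$ with $N$ entire of exponential type. Using the directional bounds and Phragmén--Lindelöf in the sectors between the rays, this function is a polynomial. It also tends to $0$ along some ray, so it vanishes identically. This forces $w=0$, and hence the root vectors are complete; density in $H_{\mathrm{neum}}$ then follows from density in $L^2$ via the graph norm.

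\textbf{Main obstacle: the case $\omega\ne 0$.} The hard step is Step 2 together with the requirement that eigenvalues be semisimple, so that root vectors are genuine Lamb modes and Definition~\ref{3_lambmode} applies. Multiple roots of $D(\cdot,\omega)$ occur only for $\omega$ in a countable or null set. That set comes from analytic dependence in $\omega$: the discriminant-type function is analytic and not identically zero. This is exactly where the phrase ``almost every frequency'' enters. I would first treat $\omega=0$ explicitly, where the Papkovich--Fadle functions apply, and then perturb.
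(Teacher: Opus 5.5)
The paper gives no proof of this theorem. It quotes the result from \cite{akian1}, so there is no internal argument to compare yours with.

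\textbf{Steps 1--3.} Your skeleton is the standard Keldysh/Dunford--Schwartz route, and it is sound as a route. Along rays with $\Re k\to\pm\infty$, the renormalized, parity-restricted determinant has size $|k|^m e^{2h|\Re k|}$, while the Green-matrix numerators are $O\bigl(|k|^M e^{2h|\Re k|}\bigr)$ uniformly in $z,z'$; this gives Step~2. The function $N/D$ is an entire quotient of functions of exponential type, so it has order at most one, and Phragm\'en--Lindel\"of applies in sectors of opening $\pi/2$. Step~3 needs two small repairs:
\begin{itemize}
\item Bounds $\lesssim|k|^N$ give no decay along a ray, but none is needed. Set $R(k)=(\mathcal{L}_{\mathrm{neum}}-ik)^{-1}$. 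If $\langle R(k)g,w\rangle$ is a polynomial of degree $\le N$ for every $g$, differentiating $N+1$ times gives $\langle R(k)^{N+2}g,w\rangle=0$. Hence $(R(k)^*)^{N+2}w=0$, and $w=0$ because $R(k)^*$ is injective (the domain is dense).
\item The resolvent trick gives density in the graph norm. That norm is weaker than the $(H^2)^4$ norm used to define $H_{\mathrm{neum}}$ (it only controls $\Xb$ in $H^1$), so your last sentence needs a word on the topology.
\end{itemize}

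\textbf{The genuine gap} is your final paragraph, which is exactly where ``almost every $\omega$'' enters. Steps 1--3 give completeness of root vectors. To get Lamb modes in the sense of Definition~\ref{3_lambmode}, you must show that off a null set of frequencies every pole of the resolvent is simple.
\begin{itemize}
\item There is no global discriminant for $D(\cdot,\omega)$, which has infinitely many zeros. Weierstrass preparation gives one only locally, and proving it is not identically zero is the whole issue.
\item ``Treat $\omega=0$ and perturb'' only says something about small $\omega$.
\item $\omega=0$ is itself degenerate. There $p=q$, so $f_s$ and $f_a$ vanish identically and must first be divided by $k_t^2-k_\ell^2$. After that, $k=0$ is a multiple root carrying genuine Jordan chains (the Saint-Venant extension and bending solutions).
\end{itemize}
Nothing in your argument excludes a curve of double roots $\omega\mapsto k(\omega)$, over an open set of frequencies, that escapes to $k=\infty$ before reaching $\omega=0$.

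The missing ingredient is control at infinity that is uniform in $\omega$. Work one parity at a time, with $D$ the entire determinant obtained by dividing $f_s$ (resp.\ $f_a$) by $q(k_t^2-k_\ell^2)$ (resp.\ $p(k_t^2-k_\ell^2)$), and set $\Sigma=\{(k,\omega)\in\mathbb{C}^2: D=\partial_kD=0\}$.
\begin{itemize}
\item[(a)] For $\omega$ in a compact set, all large roots are simple, by Rouch\'e around the Lambert-$W$ approximations of Section~3 made uniform in $\omega$. So the projection of $\Sigma$ to the $\omega$-plane is proper. A one-dimensional component of $\Sigma$ cannot lie in a line $\omega=\mathrm{const}$, since $D(\cdot,\omega)\not\equiv0$. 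Such a component would therefore have open and closed image, i.e.\ all of $\mathbb{C}$, and in particular would meet $\{\omega=0\}$.
\item[(b)] At $\omega=0$ the determinants are, up to a monomial in $k$, $\sinh(2kh)\pm2kh$. Their nonzero zeros are simple: a double zero forces $\cosh(2kh)=\mp1$, hence $\sinh(2kh)=0$, hence $k=0$. So such a component would pass through $(0,0)$.
\item[(c)] A Newton-polygon check at $(0,0)$ ($S_0$: $k^2\sim c_1\omega^2$; $A_0$: $k^4\sim c_2\omega^2$) shows that no branch through the origin is repeated. So $(0,0)$ is an isolated point of $\Sigma$.
\end{itemize}
Hence $\Sigma$ is discrete and its projection is countable. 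For $\omega$ outside this set, every zero of each parity-restricted determinant is simple. Then $N/D$ has only simple poles, there are no Jordan chains, and the root vectors are Lamb modes.
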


\begin{rem}
As explained in the introduction, Lamb modes can also be defined as
separated-variable solutions of~\eqref{3_lamb1}, of the form
\[
\ub(x,z)=(
\phi(z),
\psi(z))
\exp(ikx).
\]
Both formalisms lead to the same dispersion relations for the wavenumbers
\(k\), as well as to the same expressions for the Lamb modes. However, to the
best of our knowledge, the separated-variable formulation does not provide a
framework in which the completeness of the Lamb modes can be rigorously
justified.
\end{rem}

These Lamb modes have already been extensively studied. The next proposition
recalls the dispersion equations satisfied by the wavenumber \(k\), known as
the Rayleigh-Lamb equations in the case of Neumann boundary conditions. The
proof can be found in~\cite{achenbach1,royer1}. We first introduce the
transverse and longitudinal wavenumbers
\begin{equation}\label{kt_kl}
k_t=\omega\sqrt{\frac{\rho}{\mu}},
\qquad
k_\ell=\omega \sqrt{\frac{\rho}{\lambda+2\mu}},
\end{equation}
which satisfy \(k_t > k_\ell\).

\begin{proposition}
Every wavenumber \(k\in \mathbb{C}\) associated with a Lamb mode satisfying
Neumann boundary conditions satisfies either the symmetric dispersion relation,
also called the symmetric Rayleigh-Lamb equation,
\begin{equation} \label{3_disps}
p^2=k_\ell^2-k^2,
\qquad
q^2=k_t^2-k^2,
\qquad
\left(q^2-k^2\right)^2\tan(qh)=-4k^2pq\tan(ph),
\end{equation}
or the antisymmetric dispersion relation, also called the antisymmetric
Rayleigh-Lamb equation,
\begin{equation}\label{3_dispa}
p^2=k_\ell^2-k^2,
\qquad
q^2=k_t^2-k^2,
\qquad
\left(q^2-k^2\right)^2\tan(ph)=-4k^2pq\tan(qh).
\end{equation}
\end{proposition}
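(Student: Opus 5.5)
Starting from Definition~\ref{3_lambmode}, write $\mathcal{L}_{\mathrm{neum}}(\Xb,\Yb)=ik(\Xb,\Yb)$ as a system of ODEs in $z$. The plan is to go back to the displacement unknowns and work with the separated-variable form from the Remark. From $G(\Xb)=ik\Yb$ and $F(\Yb)=ik\Xb$, together with the definitions \eqref{3_XY}, the stresses $s,t$ can be eliminated in favour of $u,v$. One then checks that $(\phi,\psi)=(u,v)$ satisfies the Navier system in which $\partial_x$ is replaced by $ik$:
\[
\mu\phi''-(\lambda+2\mu)k^2\phi+ik(\lambda+\mu)\psi'+\rho\omega^2\phi=0,
\qquad
(\lambda+2\mu)\psi''-\mu k^2\psi+ik(\lambda+\mu)\phi'+\rho\omega^2\psi=0,
\]
with traction-free conditions $t=r=0$ at $z=\pm h$. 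Here $B_1(\Xb)=0$ is $t=0$, and $B_2(\Yb)=0$ is equivalent to $r=0$ once $\partial_z v$ is expressed through the equations.

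Next, solve this constant-coefficient system via the Helmholtz decomposition. Write $\phi=ik\Phi+\Psi'$ and $\psi=\Phi'-ik\Psi$, where $\Phi''+p^2\Phi=0$ and $\Psi''+q^2\Psi=0$, with $p^2=k_\ell^2-k^2$ and $q^2=k_t^2-k^2$ defined through \eqref{kt_kl}. The characteristic polynomial of the $4\times4$ system factors as $(\xi^2+p^2)(\xi^2+q^2)$, so in the generic case $p\neq0$, $q\neq0$, $p\neq q$ the solution space is spanned by $\cos,\sin$ of $pz$ and $qz$. Both sides of the final relations are even functions of $p$ and of $q$ (since $\tan$ is odd, $q\tan(qh)$ and $p\tan(ph)$ are even), so the sign choice of the square roots does not matter.

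The key structural step uses the symmetry $z\mapsto -z$, which leaves the problem invariant. Every solution splits into a symmetric part ($\phi$ even, $\psi$ odd) and an antisymmetric part ($\phi$ odd, $\psi$ even). The symmetric part comes from $\Phi=A\cos(pz)$ and $\Psi=B\sin(qz)$; the antisymmetric part comes from $\Phi=A\sin(pz)$ and $\Psi=B\cos(qz)$. In each family the boundary conditions at $z=-h$ follow from those at $z=h$, so one is left with a $2\times2$ homogeneous linear system in $(A,B)$. A nontrivial mode requires that at least one of the two determinants vanish.

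For the symmetric family, compute the stresses:
\[
t=\mu\bigl(2ik\Phi''\!\!\;/ik\cdot\bigr)\ \text{(schematically)}:\quad t(h)\propto -2ikpA\sin(ph)+(k^2-q^2)B\sin(qh),\qquad r(h)\propto (k^2-q^2)A\cos(ph)+2ikqB\cos(qh),
\]
where $\lambda k_\ell^2$-type terms are rewritten using $(\lambda+2\mu)k_\ell^2=\mu k_t^2$. The vanishing determinant gives $(q^2-k^2)^2\sin(qh)\cos(ph)+4k^2pq\sin(ph)\cos(qh)=0$, and dividing by $\cos(ph)\cos(qh)$ yields \eqref{3_disps}. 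The antisymmetric family gives \eqref{3_dispa} by exchanging sines and cosines.

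I expect the main obstacle to be the bookkeeping around the degenerate cases, not the algebra itself. These are $p=0$, $q=0$ or $k=0$ (where the basis changes to polynomial solutions), and the points where $\cos(ph)$ or $\cos(qh)$ vanishes, where the tangent form has to be read as the multiplied-out sine/cosine identity. I would handle them by stating the relation in the cleared form, and by noting that the determinants are entire functions of $p^2$ and $q^2$. The limiting cases then follow by continuity, or by direct substitution of the linear solutions.
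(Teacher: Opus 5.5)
Your plan is correct and is essentially the classical Helmholtz-potential derivation (Navier system, splitting by the symmetry $z\mapsto -z$, vanishing of a $2\times 2$ determinant). The paper does not prove this itself and defers to Achenbach and Royer for exactly this argument. Your first step supplies a link the paper only asserts in its Remark: it reduces $\mathcal{L}_{\mathrm{neum}}(\Xb,\Yb)=ik(\Xb,\Yb)$ to the Navier system with $t=r=0$ at $\pm h$. Note that $B_2(\Yb)=r$ holds directly once $s=(\lambda+2\mu)iku+\lambda v'$ is read off from $F(\Yb)=ik\Xb$; it is $s$, not $\partial_z v$, that needs to be expressed through the equations.

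One slip needs fixing. In the symmetric family the $B\cos(qz)$ terms of $r$ combine as $\lambda ikq-(\lambda+2\mu)ikq=-2\mu ikq$, so $r(h)=\mu\bigl[(k^2-q^2)A\cos(ph)-2ikqB\cos(qh)\bigr]$. With your displayed $+2ikqB\cos(qh)$, the determinant would give $(q^2-k^2)^2\tan(qh)=+4k^2pq\tan(ph)$, which has the wrong sign. The corrected entry gives exactly the cleared relation $(q^2-k^2)^2\sin(qh)\cos(ph)+4k^2pq\sin(ph)\cos(qh)=0$ that you state.

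A smaller point: the left side of \eqref{3_disps} is odd, not even, in $q$. Both sides change sign together under $q\mapsto -q$, though, so your conclusion that the square-root branch does not matter still holds.
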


For a given value of \(k\), explicit expressions for the corresponding Lamb
modes can be found in~\cite{achenbach1} and are given in Appendix~A. 

\subsection{Countability of the solutions of the dispersion relations}

Although it seems clear that the Neumann Rayleigh-Lamb equations have only a
countable number of solutions, we found it difficult to locate a complete proof
of this fact in the literature. For the sake of a rigorous study of the
behavior of the wavenumbers in this paper, we provide in this subsection a
self-contained proof of the following result.

To prove this result, we rewrite the Rayleigh-Lamb equations as zero sets of
holomorphic functions. This reformulation will also be useful later to establish
the asymptotic behavior of these functions. Using trigonometric identities,
equation~\eqref{3_disps} can be rewritten as
\begin{equation} \label{eq:symVN2}
\dfrac{1}{2}
\left(\left(q^2 - k^2\right)^2 + 4k^2pq\right)
\sin((q+p)h)
+
\dfrac{1}{2}
\left(\left(q^2 - k^2\right)^2 - 4k^2pq\right)
\sin((q-p)h)
= 0,
\end{equation}
and, similarly, equation~\eqref{3_dispa} becomes
\begin{equation}\label{eq:antisymVN2}
\dfrac{1}{2}
\left(\left(q^2 - k^2\right)^2 + 4k^2pq\right)
\sin((q+p)h)
-
\dfrac{1}{2}
\left(\left(q^2 - k^2\right)^2 - 4k^2pq\right)
\sin((q-p)h)
= 0.
\end{equation}

For any complex number \(z\), we define
$
\sqrt{z}=z^{1/2}=\sqrt{|z|}e^{i\mathrm{arg}(z)/2}$ with 
$\mathrm{arg}(z)\in(-\pi,\pi],
$
using the principal branch of the complex square root. We observe that \(p\)
and \(q\) are defined up to a multiplicative sign. However, the choice of signs
for \(p\) and \(q\) does not affect the corresponding value of \(k\). Thus, we can arbitrarily 
choose
\begin{equation}
q = ik\left(1 - \dfrac{k_t^2}{k^2}\right)^{1/2},
\qquad
p = ik\left(1 - \dfrac{k_\ell^2}{k^2}\right)^{1/2}.
\end{equation}
We then introduce the functions
\begin{equation}
\begin{array}{rrcl}
q :& \mathbb{C}^* & \to & \mathbb{C}\\
& z & \mapsto & iz\left(1 - \dfrac{k_t^2}{z^2}\right)^{1/2}
\end{array}
\quad \text{and} \quad
\begin{array}{rrcl}
p :& \mathbb{C}^* & \to & \mathbb{C}\\
& z & \mapsto & iz\left(1 - \dfrac{k_\ell^2}{z^2}\right)^{1/2}
\end{array},
\end{equation}
with an extension at \(0\) given by \(q(0)=k_t\) and \(p(0)=k_\ell\), and we
also define

\begin{equation}
\begin{array}{rrcl}
f_s :& \mathbb{C} & \to & \mathbb{C}\\
& z & \mapsto &
\left(q^2(z) - z^2\right)^2\sin(q(z)h)\cos(p(z)h)
+
4z^2p(z)q(z)\sin(p(z)h)\cos(q(z)h)
\end{array},
\end{equation}
and
\begin{equation}
\begin{array}{rrcl}
f_a :& \mathbb{C} & \to & \mathbb{C}\\
& z & \mapsto &
\left(q^2(z) - z^2\right)^2\sin(p(z)h)\cos(q(z)h)
+
4z^2p(z)q(z)\sin(q(z)h)\cos(p(z)h)
\end{array}.
\end{equation}

The functions \(f_s\) and \(f_a\) are holomorphic on
\(\mathbb{C}\setminus[-k_t,k_t]\), and their zeros are precisely the solutions
of~\eqref{3_disps} and~\eqref{3_dispa}, respectively. These functions will be
used below to prove the proposition, and again later in the analysis.

\begin{proposition}
The set of solutions of the Neumann Rayleigh-Lamb equations
\eqref{3_disps} and~\eqref{3_dispa} is at most countable. Moreover, if
\(k\in\mathbb{C}\) is a solution of~\eqref{3_disps}
(resp.~\eqref{3_dispa}), then \(-k\) and \(\overline{k}\) are also
solutions of~\eqref{3_disps} (resp.~\eqref{3_dispa}).
\end{proposition}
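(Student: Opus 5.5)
The plan is to avoid the branch cut altogether by working with entire functions of $k^2$. The dispersion relations \eqref{3_disps} and \eqref{3_dispa} are invariant under the sign changes $p\mapsto -p$ and $q\mapsto -q$. Under $p\mapsto-p$, for instance, $\tan(ph)$ changes sign and so does the right-hand side $-4k^2pq\tan(qh)$.

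\textbf{Step 1: entire reformulation.} Divide $f_s$ by $\cos(ph)\cos(qh)$, or better, clear denominators differently, so that only even functions of $p$ and of $q$ remain. For the symmetric case, multiply \eqref{3_disps} by $\cos(qh)\cos(ph)$ and write it as
\[
g_s(k)=(q^2-k^2)^2\,\frac{\sin(qh)}{q}\cos(ph)+4k^2p^2\,\frac{\sin(ph)}{p}\cos(qh).
\]
Here $\sin(qh)/q$, $\cos(qh)$, $\sin(ph)/p$ and $\cos(ph)$ are entire functions of $q^2=k_t^2-k^2$ and $p^2=k_\ell^2-k^2$, since their power series contain only even powers. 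Hence $g_s$ is an entire function of $k$, and in fact of $k^2$. Moreover $g_s=f_s/q$ wherever $q\neq0$.

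For the antisymmetric case take
\[
g_a(k)=(q^2-k^2)^2\,\frac{\sin(ph)}{p}\cos(qh)+4k^2q^2\,\frac{\sin(qh)}{q}\cos(ph)=f_a/p.
\]
The zeros of $g_s$ and $g_a$ contain all solutions of the dispersion relations, up to possibly adding or removing the finitely many points where $p$ or $q$ vanishes, namely $k=\pm k_t,\pm k_\ell$. Points where $\cos(ph)\cos(qh)=0$ correspond to the equation in its cleared form; this is the meaning adopted via \eqref{eq:symVN2}.

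\textbf{Step 2: non-vanishing.} The zero set of an entire function that is not identically zero is discrete in $\mathbb{C}$, hence countable, since $\mathbb{C}$ is a countable union of compacts each containing finitely many zeros. It therefore suffices to exhibit one point where $g_s\neq0$, and one where $g_a\neq0$. This is the only real obstacle.

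I would take $k=iy$ with $y\to+\infty$ real. Then $p^2=k_\ell^2+y^2$ and $q^2=k_t^2+y^2$ are real positive, so $p$ and $q$ are real and $\approx y$. Also $q^2-k^2=k_t^2+2y^2$ and $4k^2p^2=-4y^2p^2$, so
\[
g_s=(k_t^2+2y^2)^2\,\frac{\sin qh}{q}\cos ph-4y^2p^2\,\frac{\sin ph}{p}\cos qh .
\]
This is oscillatory, so a cleaner choice is $k$ real and large. Then $p=i\sqrt{k^2-k_\ell^2}$ and $q=i\sqrt{k^2-k_t^2}$ are purely imaginary. Setting $P=\sqrt{k^2-k_\ell^2}$ and $Q=\sqrt{k^2-k_t^2}$, the terms $\sin(qh)/q=\sinh(Qh)/Q$, $\cos(ph)=\cosh(Ph)$, and so on are all positive, and
\[
g_s=(k^2+Q^2)^2\,\frac{\sinh Qh}{Q}\cosh Ph-4k^2P^2\,\frac{\sinh Ph}{P}\cosh Qh .
\]
Both terms grow like $e^{2kh}$ with the same leading coefficient of order $4k^4/k$, so the leading orders cancel. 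This is the classical Rayleigh-wave cancellation, and it is genuinely delicate.

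To avoid it I would evaluate at a convenient point instead, e.g.\ $k=0$. There $q=k_t$, $p=k_\ell$, and
\[
g_s(0)=k_t^4\,\frac{\sin(k_th)}{k_t}\cos(k_\ell h),
\]
which is nonzero unless $\sin(k_th)\cos(k_\ell h)=0$. If it vanishes, I would use $k=k_t$, where $q=0$ and $g_s(k_t)=k_t^4\,h\cos(k_\ell h)+\dots$, or else a Taylor argument. The simplest robust alternative is to compare the growth along the real axis to next order, or along $k=iy$ as a function of $y$, and note that $g_s$ cannot be identically zero because its restriction has a nonzero Taylor coefficient. Concretely, $g_s$ is a polynomial-type series in $k^2$, and its coefficient of highest degree in $k_t,k_\ell$ at order $h^1$ (expanding in $h$) is $((k_t^2-2k^2)^2-4k^2(k_\ell^2-k^2))h\ne0$. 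So $g_s\not\equiv0$ as a function of $(k,h)$, and for a fixed $h$ one falls back on the point evaluations above.

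\textbf{Step 3: symmetries.} Since $g_s$ and $g_a$ depend only on $k^2$, the map $k\mapsto-k$ preserves zeros. Their power series in $k^2$ have real coefficients, because $k_t$, $k_\ell$ and $h$ are real. Hence $\overline{g_s(k)}=g_s(\bar k)$, which gives the conjugation symmetry. The same reasoning applies to $g_a$, which concludes the argument.
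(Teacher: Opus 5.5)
Your route differs from the paper's and is in several respects tidier. Passing to $g_s=f_s/q$ and $g_a=f_a/p$, which are even entire functions of $k$ with real Taylor coefficients, removes the branch cut altogether. The paper's patching of $[-k_t,k_t]$ with the auxiliary branches $\widetilde f_{s,1},\widetilde f_{s,2}$ then becomes unnecessary, and both symmetries follow at once rather than via $\overline{q(z)}=-q(\overline{z})$. Your handling of the exceptional points is correct: $Z(f_s)$ and $Z(g_s)$ differ at most at $\pm k_t$, and $Z(f_a)$ and $Z(g_a)$ at most at $\pm k_\ell$.

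The gap is Step~2, which you rightly call the crux but do not close.
\begin{itemize}
\item The value $g_s(0)=k_t^3\sin(k_th)\cos(k_\ell h)$ vanishes whenever $k_th\in\pi\mathbb{Z}$ or $k_\ell h\in\frac{\pi}{2}+\pi\mathbb{Z}$.
\item Your fallback at $k=k_t$ is miscomputed. There $p^2=k_\ell^2-k_t^2<0$, so with $P=\sqrt{k_t^2-k_\ell^2}$ one gets $g_s(k_t)=k_t^4h\cosh(Ph)-4k_t^2P\sinh(Ph)$. This vanishes when $k_t^2h=4P\tanh(Ph)$. Together with $k_th=\pi$, this happens for an admissible material with $k_\ell/k_t\approx0.6$, so both point evaluations can fail at once.
\item The expansion in powers of $h$ only shows that $g_s$ is not identically zero as a function of $(k,h)$. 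That does not exclude $g_s(\cdot\,;h_0)\equiv0$ for isolated values $h_0$, so "for a fixed $h$ one falls back on the point evaluations" is circular.
\item You give no argument at all for $g_a$.
\end{itemize}

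The missing step is the computation you abandoned: on the real axis the cancellation occurs only at leading order. For real $k>k_t$, put $P=\sqrt{k^2-k_\ell^2}$ and $Q=\sqrt{k^2-k_t^2}$. The product-to-sum form \eqref{eq:symVN2} gives
\[
Q\,g_s(k)=\tfrac{1}{2}R(k)\sinh\bigl((P+Q)h\bigr)+\tfrac{1}{2}\bigl((2k^2-k_t^2)^2+4k^2PQ\bigr)\sinh\bigl((Q-P)h\bigr),
\]
where $R(k)=(2k^2-k_t^2)^2-4k^2PQ=-2(k_t^2-k_\ell^2)k^2+O(1)$ is the Rayleigh function. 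The first term has exact order $k^2e^{2kh}$. The second is $O(k^3)$, since $Q-P=O(1/k)$. Hence $g_s(k)\neq0$ for large real $k$. The same holds for $P\,g_a(k)$, which has the same form with $\sinh((Q-P)h)$ replaced by $\sinh((P-Q)h)$. This is the real-axis case of the expansion leading to \eqref{eq:DLdeSVN1}.

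The paper's own proof applies the identity principle to $f_s$ on the connected set $\mathbb{C}\setminus[-k_t,k_t]$. It therefore tacitly needs the same non-vanishing and does not verify it either. Once this step is supplied, your argument is complete.
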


\begin{proof}
We prove the result for \(f_s\), the proof for \(f_a\) being identical.
Let \(Z(f_s)\) denote the set of zeros of \(f_s\). Since \(f_s\) is
holomorphic on \(\mathbb{C}\setminus[-k_t,k_t]\), the principle of isolated
zeros implies that \(Z(f_s)\) has no accumulation point in
\(\mathbb{C}\setminus[-k_t,k_t]\).

For \(n\in\mathbb{N}^*\), set
\begin{equation}
K_n =
\left\{
z\in\mathbb{C}
\,\middle|\,
|z|\leq n
\quad\text{and}\quad
\operatorname{dist}\bigl(z,[-k_t,k_t]\bigr)\geq \frac{1}{n}
\right\}.
\end{equation}

Then \(K_n\) is compact and
$
\mathbb{C}\setminus[-k_t,k_t]
=
\bigcup_{n\geq 1} K_n.
$
Moreover, \(Z(f_s)\cap K_n\) is compact. If \(Z(f_s)\cap K_n\) were infinite,
then by the Bolzano-Weierstrass theorem it would have an accumulation point, contradicting the principle
of isolated zeros. Hence \(Z(f_s)\cap K_n\) is finite for every
\(n\in\mathbb{N}^*\), and therefore
$
Z(f_s)\cap\bigl(\mathbb{C}\setminus[-k_t,k_t]\bigr)
=
\bigcup_{n\geq 1} \bigl(Z(f_s)\cap K_n\bigr)
$
is at most countable.

It remains to examine the behavior on the cut \([-k_t,k_t]\). Let
\(z=re^{i\theta}\), with \(r\in\mathbb{R}_+\) and
\(\theta\in[0,2\pi)\), and define
$
s(z)=\sqrt{r}e^{i\theta/2}.$
This defines another branch of the square root, which is holomorphic on
\(\mathbb{C}\setminus\mathbb{R}_+\) and agrees with the principal branch on
\(\mathbb{R}_-\). Let \(\widetilde q\) and \(\widetilde p\) denote the
functions defined as \(q\) and \(p\), respectively, but using \(s\) instead of
the principal branch of the square root. Then \(\widetilde q\) is holomorphic
on
$
\mathbb{C}^*
\setminus
\bigl((-\infty,-k_t]\cup[k_t,+\infty)\bigr),
$
and \(\widetilde p\) is holomorphic on
$
\mathbb{C}^*
\setminus
\bigl((-\infty,-k_\ell]\cup[k_\ell,+\infty)\bigr).
$

We now define \(\widetilde f_{s,1}\) as the function obtained from \(f_s\) by
replacing \(q\) with \(\widetilde q\). The function \(\widetilde f_{s,1}\) is
holomorphic on
$
\mathbb{C}
\setminus
\bigl(
(-\infty,-k_t]\cup[k_t,+\infty)\cup[-k_\ell,k_\ell]
\bigr),
$
and coincides with \(f_s\) on
$
(-k_t,-k_\ell)\cup(k_\ell,k_t).
$
By the same argument as above, the zero set of \(\widetilde f_{s,1}\) in its
domain of holomorphy is at most countable. Hence the set of zeros of \(f_s\)
on \((-k_t,-k_\ell)\cup(k_\ell,k_t)\) is at most countable.

Finally, let \(\widetilde f_{s,2}\) be the function obtained from \(f_s\) by
replacing both \(q\) and \(p\) with \(\widetilde q\) and \(\widetilde p\). This
function is holomorphic on
$
\mathbb{C}^*
\setminus
\bigl((-\infty,-k_\ell]\cup[k_\ell,+\infty)\bigr)
$
and coincides with \(f_s\) on \((-k_\ell,k_\ell)\setminus \{0\}\). Again, its zero set is at
most countable, and therefore so is the set of zeros of \(f_s\) on
\((-k_\ell,k_\ell) \setminus \{0\}\).

Combining these different subsets of \(Z(f_s)\), together with the finite set
\(\{-k_t,-k_\ell,0,k_\ell,k_t\}\), we conclude that \(Z(f_s)\) is at most
countable. The same argument applies to \(f_a\), which proves the first part
of the proposition.

We now prove the symmetry properties. First, \(f_s\) is an odd function. Next, we use the identities
\begin{equation}
\forall z\in\mathbb{C}\setminus\mathbb{R}_-,
\qquad
\sqrt{\overline{z}}=\overline{\sqrt{z}},
\qquad
\forall z\in\mathbb{C}\setminus\mathbb{R},
\qquad
1-\frac{k_t^2}{z^2}\in\mathbb{C}\setminus\mathbb{R}.
\end{equation}
It follows that
$
\overline{q(z)}=-q(\overline{z})$ and $\overline{p(z)}=-p(\overline{z}).
$
Consequently,
$
f_s(\overline{z})=-\overline{f_s(z)},
$
which concludes the proof. 
\end{proof}

Thanks to the countability of the solutions of the dispersion relations, the
Lamb wavenumbers can be indexed as \((k_n)_{n\in\mathbb{N}}\), and the
corresponding Lamb modes as
$
(\Xb_n,\Yb_n)=(u_n,t_n,-s_n,v_n).$
This makes it possible to expand the solution of~\eqref{3_lamb1} as
\begin{equation}
(\ub,\vb)
\approx
\sum_{n\in\mathbb{N}}
\bigl(a_n(x)u_n(y),\, b_n(x)v_n(y)\bigr).
\end{equation}

Substituting this expansion into~\eqref{3_XY}, or equivalently
into~\eqref{3_lamb1}, yields a family of one-dimensional equations satisfied
by the modal amplitudes \(a_n\) and \(b_n\), which can then be solved as
in~\cite{bonnetier3,pagneux1}. However, in order to establish a complete well-posedness
result, derive an explicit representation formula for the solution, and obtain
estimates with respect to the source terms, it is first necessary to gain a
precise understanding of the asymptotic behavior of the Lamb wavenumbers
\(k_n\) as \(n\to\infty\). This will be the subject of the next section. Before doing so, we present the main differences induced by changing the
boundary conditions.

\subsection{Dirichlet boundary conditions}

We now consider the same waveguide, but replace the Neumann boundary
conditions~\eqref{3_neumann} with Dirichlet boundary conditions
\begin{equation} \label{3_dirichlet}
\ub=\bb^\top
\quad \text{on } \partial\Omega_\top,
\qquad
\ub=\bb^\bot
\quad \text{on } \partial\Omega_\bot,
\end{equation}
where \(\bb^\top=(b_1^\top,b_2^\top)\) and
\(\bb^\bot=(b_1^\bot,b_2^\bot)\) are given boundary source terms.
Let \(\bb\) be a continuous lifting of \(\bb^\top\) and \(\bb^\bot\) to
\(\Omega\), and define
$
\gb=\nabla\cdot\sib(\bb)+\rho\omega^2\bb.
$
Using the same \(\Xb/\Yb\) formalism as in the previous subsection, the
elasticity equation can be rewritten as follows:

\begin{proposition}
The system~\eqref{3_lamb1}, with the Dirichlet boundary
conditions~\eqref{3_dirichlet}, is equivalent to
\begin{equation}
 \partial_x
 \begin{pmatrix}
 \Xb\\
 \Yb
 \end{pmatrix}
=
\mathcal{L}(\Xb,\Yb)
+
\begin{pmatrix}
0\\
-f_2-g_2\\
f_1+g_1\\
0
\end{pmatrix}
\qquad \text{in } \Omega,
\end{equation}
with the boundary conditions
$
B_3(\Xb)=
B_4(\Yb)=0,$
where \(\mathcal{L}\) is defined by~\eqref{3_FG}, and \(B_3\) and \(B_4\)
are the operators
\begin{equation}\label{3_B3B4}
B_3(\Xb)=\Xb\cdot e_x,
\qquad
B_4(\Yb)=\Yb\cdot e_z.
\end{equation}
\end{proposition}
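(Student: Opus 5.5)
The plan is to reduce to Proposition~\ref{3_equivalenceXY}. Its proof in~\cite{bonnetier3} is purely algebraic in the interior of $\Omega$. The only places where the Neumann data entered were the jump terms $b_i^{\top},b_i^{\bot}$ multiplied by Dirac masses, and the boundary operators $B_1,B_2$. So the proof splits into three parts: lifting the Dirichlet data, reproducing the interior computation, and identifying the new boundary conditions.

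\emph{Lifting.} Set $\tilde{\ub}=\ub-\bb$. Since $\bb$ coincides with $\bb^\top$ and $\bb^\bot$ on $\partial\Omega_\top$ and $\partial\Omega_\bot$, problem~\eqref{3_lamb1}--\eqref{3_dirichlet} is equivalent to
\[
\nabla\cdot\sib(\tilde{\ub})+\rho\omega^2\tilde{\ub}=-\fb-\gb \quad\text{in }\Omega,\qquad \tilde{\ub}=0 \quad\text{on }\partial\Omega.
\]
This follows from the linearity of $\ub\mapsto\nabla\cdot\sib(\ub)+\rho\omega^2\ub$ and the definition of $\gb$. For $\gb$ to make sense, at least in distributions, I take $\bb$ regular enough, e.g.\ $H^2$ in $z$. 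From now on $(\Xb,\Yb)$ is built from $\tilde{\ub}$ through~\eqref{3_XY}.

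\emph{Interior equations.} I will redo the elimination underlying Proposition~\ref{3_equivalenceXY}, keeping only the interior part.
\begin{itemize}
\item The definition $t=\mu(\partial_z u+\partial_x v)$ gives $\partial_x v=t/\mu-\partial_z u$. This is the fourth line, $\partial_x \Yb\cdot e_z=G(\Xb)\cdot e_z$.
\item The second line of~\eqref{3_lamb1}, $\partial_x t+\partial_z r+\rho\omega^2 v=-f_2-g_2$, is the second line once $r$ is eliminated. For this, use
\[
r=\frac{\lambda}{\lambda+2\mu}s+\frac{4\mu(\lambda+\mu)}{\lambda+2\mu}\partial_z v,
\]
which follows from the expressions of $s$ and $r$.
\item The relation $\partial_x u=\bigl(s-\lambda\partial_z v\bigr)/(\lambda+2\mu)$ gives the first line.
\item The first line of~\eqref{3_lamb1}, $\partial_x s+\partial_z t+\rho\omega^2u=-f_1-g_1$, gives the third line.
\end{itemize}
Since no boundary terms are involved, the right-hand side now contains $f_i+g_i$ and no Dirac masses. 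Each step is reversible: given $(\Xb,\Yb)$ solving the system, $s,t$ are recovered as components, and the constitutive relations hold by construction. This yields the equivalence in the interior.

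\emph{Boundary conditions.} The condition $\tilde{\ub}=0$ on $z=\pm h$ reads $u(\pm h)=v(\pm h)=0$. Since $u=\Xb\cdot e_x$ and $v=\Yb\cdot e_z$, this is exactly $B_3(\Xb)=B_4(\Yb)=0$ with $B_3,B_4$ as in~\eqref{3_B3B4}. Unlike the Neumann case, no stress appears on the boundary, so nothing is transferred into the source term.

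The main obstacle I expect is not algebraic but functional. One must justify the equivalence in the appropriate weak or distributional sense, given only a continuous lifting $\bb$. In particular, $\gb$ involves second derivatives of $\bb$ and must be understood as a distribution. I would either assume enough regularity of $\bb^\top,\bb^\bot$ to choose $\bb\in H^2$, or interpret both systems in distributions on $\Omega$ and check that each manipulation above (multiplication by constants, differentiation in $z$) is legitimate there.
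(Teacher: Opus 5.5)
Your proposal is correct and follows the route the paper takes implicitly. You subtract the lifting \(\bb\) so the Dirichlet data become the extra source \(\gb\), redo the \(\Xb/\Yb\) elimination of Proposition~\ref{3_equivalenceXY} without Dirac masses, and read \(u(\pm h)=v(\pm h)=0\) as \(B_3(\Xb)=B_4(\Yb)=0\). The paper gives no proof of its own, so your two remarks fill in what it leaves unstated: \((\Xb,\Yb)\) must be built from \(\ub-\bb\) rather than \(\ub\), and \(\gb\) needs a lifting more regular than merely continuous (e.g.\ an \(H^2\) lifting of \(H^{3/2}\) data) to be an \(L^2\) source.
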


As in the Neumann case, we introduce the space
\begin{equation}
H_{\mathrm{diri}}
:=
\left\{
(\Xb,\Yb)\in \bigl(H^2(-h,h)\bigr)^4
\,\middle|\,
B_3(\Xb)(\pm h)=B_4(\Yb)(\pm h)=0
\right\},
\end{equation}
and the operator
\begin{equation}
\mathcal{L}_{\mathrm{diri}}:
\begin{array}{rcl}
H_{\mathrm{diri}} & \longrightarrow & \bigl(L^2(-h,h)\bigr)^4,\\[0.2em]
(\Xb,\Yb) & \longmapsto & \bigl(F(\Yb),G(\Xb)\bigr).
\end{array}
\end{equation}
To diagonalize this operator, we again introduce Lamb modes.

\begin{defi}\label{3_lambmode_diri}
A Lamb mode with Dirichlet boundary conditions is a non-trivial
\((\Xb,\Yb)\in H_{\mathrm{diri}}\) associated with a wavenumber
\(k\in\mathbb{C}\), satisfying
\[
\mathcal{L}_{\mathrm{diri}}(\Xb,\Yb)
=
ik\,(\Xb,\Yb).
\]
\end{defi}

Adapting the arguments of~\cite{akian1,lutianov1}, one can show that these
eigenmodes form a complete set of functions for the operator
\(\mathcal{L}_{\mathrm{diri}}\). Similarly, adapting the derivations
of~\cite{achenbach1,royer1}, one obtains the dispersion relations satisfied by
the Dirichlet Lamb modes:

\begin{proposition}
Every wavenumber \(k\in\mathbb{C}\) associated with a Lamb mode satisfying
Dirichlet boundary conditions satisfies either the symmetric Dirichlet
dispersion relation
\begin{equation} \label{eq:symD1}
p^2 = k_\ell^2-k^2,
\qquad
q^2 = k_t^2-k^2,
\qquad
k^2\tan(qh)+pq\tan(ph)=0,
\end{equation}
or the antisymmetric Dirichlet dispersion relation
\begin{equation}\label{eq:antisymD1}
p^2 = k_\ell^2-k^2,
\qquad
q^2 = k_t^2-k^2,
\qquad
k^2\tan(ph)+pq\tan(qh)=0.
\end{equation}
\end{proposition}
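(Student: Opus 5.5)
The plan is to solve the eigenvalue problem of Definition~\ref{3_lambmode_diri} explicitly, as in the Neumann case. Write $(\Xb,\Yb)=(u,t,-s,v)$. The relations $\mathcal{L}_{\mathrm{diri}}(\Xb,\Yb)=ik(\Xb,\Yb)$ are equivalent, by Proposition~\ref{3_equivalenceXY} read in reverse with $\partial_x$ replaced by $ik$, to the following system for the profile $\ub(x,z)=(\phi(z),\psi(z))e^{ikx}$:
\[
\nabla\cdot\sib(\ub)+\rho\omega^2\ub=0 \quad\text{in } \Omega,
\]
together with the Dirichlet conditions $\phi(\pm h)=\psi(\pm h)=0$. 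These boundary conditions are exactly $B_3(\Xb)=B_4(\Yb)=0$, since $\Xb\cdot e_x=u$ and $\Yb\cdot e_z=v$. So it suffices to find the values of $k$ for which this ODE boundary value problem has a nontrivial solution.

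Next we use the Helmholtz decomposition $\ub=\nabla\varphi+\nabla\times(\chi e_y)$, with $\varphi=\Phi(z)e^{ikx}$ and $\chi=\Psi(z)e^{ikx}$. The potentials satisfy $\Phi''+p^2\Phi=0$ and $\Psi''+q^2\Psi=0$, with $p,q$ as in the statement; this step is standard and taken from~\cite{achenbach1,royer1}. The operator commutes with the reflection $z\mapsto -z$, under which $\phi$ is even or odd and $\psi$ has the opposite parity. Hence the mode space splits into a symmetric part and an antisymmetric part.

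In the symmetric case we take $\Phi=A\cos(pz)$ and $\Psi=B\sin(qz)$. With the sign convention $\phi=ik\Phi-\Psi'$ and $\psi=\Phi'+ik\Psi$, this gives
\[
\phi=ikA\cos(pz)-qB\cos(qz),\qquad \psi=-pA\sin(pz)+ikB\sin(qz).
\]
Imposing $\phi(h)=\psi(h)=0$ yields a $2\times2$ linear system in $(A,B)$. Its determinant is
\[
ik\cdot ik\cos(ph)\sin(qh)-pq\sin(ph)\cos(qh)=-\bigl(k^2\cos(ph)\sin(qh)+pq\sin(ph)\cos(qh)\bigr).
\]
Dividing by $\cos(ph)\cos(qh)$ gives~\eqref{eq:symD1}. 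The antisymmetric case is identical with $\Phi=A\sin(pz)$ and $\Psi=B\cos(qz)$, and exchanging the roles of $\sin$ and $\cos$ gives~\eqref{eq:antisymD1}.

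The main obstacle is justifying that every mode arises this way, including the degenerate cases. The decomposition into potentials fails when $p=0$, $q=0$ or $k=0$, and the division fails when $\cos(ph)\cos(qh)=0$. To avoid both issues, I would state the relations in the product form
\[
k^2\cos(ph)\sin(qh)+pq\sin(ph)\cos(qh)=0
\]
and its antisymmetric analogue, interpreting~\eqref{eq:symD1} and~\eqref{eq:antisymD1} in this sense. This mirrors the use of $f_s$ and $f_a$ in the Neumann case.

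For the degenerate values of $k$, I would solve the constant-coefficient first-order system for $(\phi,\psi,\phi',\psi')$ directly. Its general solution is spanned by $e^{\pm ipz}$ and $e^{\pm iqz}$, with polynomial factors when $p=0$ or $q=0$. These are finitely many special values, and they can be checked by hand, or obtained by continuity of the determinant as $p\to0$ or $q\to0$.
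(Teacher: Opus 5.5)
Your proposal is correct and is the Helmholtz-potential derivation of the cited references that the paper invokes without writing out. The parity splitting, the $2\times 2$ determinants at $z=h$, and your product-form reading $k^2\cos(ph)\sin(qh)+pq\sin(ph)\cos(qh)=0$ (and its antisymmetric analogue) are all right, and the product form is the right way to state the result where $\cos(ph)\cos(qh)=0$. Your degenerate-case step is only a sketch, but it closes easily: $k=0$ is not degenerate when $\omega\neq 0$, and using $\sin(qz)/q$ in the symmetric case (resp.\ $\sin(pz)/p$ in the antisymmetric case) gives a determinant whose product with $-q$ (resp.\ $-p$) is exactly the product form, so the relation also holds at $q=0$ (resp.\ $p=0$).
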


Explicit expressions for the corresponding Lamb modes can also be found in Appendix~A. Furthermore, the proof developed in
Section~2.2 can be adapted to these dispersion relations to show that
their sets of solutions are at most countable. The resulting wavenumbers present
the same symmetry properties with respect to the real axis and the origin as in
the Neumann case.

\subsection{Fluid boundary conditions}

We now turn to fluid boundary conditions, which
constitute the third most common class of boundary conditions encountered in
the study of elastic waveguides. In this setting, the waveguide is
immersed in an acoustic fluid of density \(\rho_f\) and sound speed \(c_f\).
The fluid domain above the waveguide is denoted by
$
\Omega_f^\top=\mathbb{R}\times(h,+\infty),
$
while the fluid domain below the waveguide is denoted by
$
\Omega_f^\bot=\mathbb{R}\times(-\infty,-h).
$
The geometry is represented in Figure~\ref{3_guide2D_fluide}.

\begin{figure}[h]
\begin{center}\begin{tikzpicture}[scale=0.8]\draw (-5.5,0.7) -- (5,0.7);\draw (-5.5,-0.7) -- (5,-0.7);\draw [white,fill=red!10] (-5.5,0.7) -- (5,0.7) -- (5,-0.7) --(-5.5,-0.7) -- cycle;\draw [white,fill=blue!10] (-5.5,-1.5) -- (5,-1.5) -- (5,-0.7) --(-5.5,-0.7) -- cycle;\draw [white,fill=blue!10] (-5.5,1.5) -- (5,1.5) -- (5,0.7) --(-5.5,0.7) -- cycle;\draw [white,fill=gray!40] (0,0.2) circle (0.4);\draw (0,0.2) node{$\fb$};\draw (4,0) node{$\Omega$};\draw (4.1,1.1) node{$\Omega_f^\top$};\draw (4.1,-1.1) node{$\Omega_f^\bot$};\draw (5.1,0.7) node[right]{$\partial \Omega_\top$};\draw (5.1,-0.7) node[right]{$\partial \Omega_\bot$};\draw (-5.5,0.7) node[left]{$h$};\draw (-5.5,-0.7) node[left]{$-h$};\draw [ultra thick][->] (-5,-0.3)--(-4.3,-0.3) node[above]{$e_x$};\draw [ultra thick][->] (-5,-0.3)--(-5,0.4) node[right]{$e_z$};\end{tikzpicture}\end{center}\vspace{-8mm}
\caption{\label{3_guide2D_fluide}
Geometry of a two-dimensional elastic waveguide \(\Omega\) immersed in an
acoustic fluid. Elastic wavefields are generated by an internal source term
\(\fb\).}
\end{figure}

The displacement field in the solid is denoted by
\(\ub=(u,v)\) and satisfies equation~\eqref{3_lamb1}. In the fluid domains
\(\Omega_f^j\), \(j\in\{\top,\bot\}\), we denote by
\(\ub_f^j=(u_f^j,v_f^j)\) the fluid displacement and by \(p_f^j\) the acoustic
pressure. The fluid motion is governed by the standard acoustic equations~\cite{diaz1}:
\begin{equation}\label{eq_fluid}
\left\{
\begin{array}{l}
\nabla p_f^j-\omega^2\rho_f \ub_f^j = 0, \\[0.2em]
p_f^j +\rho_f c_f^2\nabla\cdot \ub_f^j=0,
\end{array}
\right.
\qquad \text{in } \Omega_f^j.
\end{equation}

At the interfaces \(\partial\Omega_\top\) and \(\partial\Omega_\bot\), we
impose continuity of the normal displacement together with continuity of the
normal stress. This leads to the following transmission conditions, referred
to hereafter as fluid boundary conditions:
\begin{equation}\label{bc_fluid}
\forall j\in\{\bot,\top\},
\qquad
\ub\cdot\bm{\nu}=\ub_f^j\cdot\bm{\nu},
\qquad
\sib(\ub)\cdot \bm{\nu}=-p_f^j\bm{\nu},
\qquad
\text{on } \partial\Omega_j .
\end{equation}

Although, to our knowledge, this coupled fluid-solid problem has not been
formulated previously in the operator framework considered here, most of the
analysis developed for the elastic part can be reused. Keeping the
\(\Xb/\Yb\) variables introduced above, we define the additional fluid
variables
\begin{equation}
\bm{P}^j
=
p_f^j\,e_x
+
\partial_x p_f^j\,e_z .
\end{equation}
The coupled system can then be rewritten as follows: 

\begin{proposition}
The coupled system consisting of~\eqref{eq_fluid} and~\eqref{3_lamb1}, with the fluid boundary conditions~\eqref{bc_fluid}, is
equivalent to
\begin{equation}
\partial_x
\begin{pmatrix}
\bm{P}^j\\
\Xb\\
\Yb
\end{pmatrix}
=
\begin{pmatrix}
\mathcal{F}(\bm{P}^j)\\
\mathcal{L}(\Xb,\Yb)
\end{pmatrix}
+
\begin{pmatrix}
\bm{0}_{\mathbb{R}^3}\\
-f_2\\
f_1\\
0
\end{pmatrix},
\end{equation}
with the boundary conditions
\begin{equation}
B_1(\Xb)=0,
\qquad
B_2(\Yb)=B_5(\bm{P}^j),
\qquad
B_3(\Xb)=B_6(\bm{P}^j),
\qquad
B_4(\Yb)=B_7(\bm{P}^j),
\qquad
\text{on } \partial\Omega_j,
\end{equation}
for \(j\in\{\top,\bot\}\). Here, \(\mathcal{L}\) is defined by~\eqref{3_FG},
\(B_1,B_2,B_3,B_4\) are defined in
\eqref{3_B1B2} and \eqref{3_B3B4}, and
\(\mathcal{F}\), \(B_5\), \(B_6\), and \(B_7\) are given by
\begin{equation}
\mathcal{F}
=
\begin{pmatrix}
0 & 1\\
-\partial_{yy}-\dfrac{\omega^2}{c_f^2} & 0
\end{pmatrix},
\qquad
B_5(\bm{P}^j)
=
-(\nu\cdot e_z)\,\bm{P}^j\cdot e_x,
\end{equation}
and
\begin{equation}
B_6(\bm{P}^j)
=
\frac{1}{\omega^2\rho_f}\,\bm{P}^j\cdot e_x,
\qquad
B_7(\bm{P}^j)
=
\frac{1}{\omega^2\rho_f}\,
\partial_y\bigl(\bm{P}^j\cdot e_x\bigr).
\end{equation}
\end{proposition}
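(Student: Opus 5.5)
The plan is to reduce the statement to two ingredients: Proposition~\ref{3_equivalenceXY}, which already handles the elastic part, and a direct manipulation of the acoustic system~\eqref{eq_fluid}. I will first recall, following~\cite{bonnetier3}, that in the solid~\eqref{3_lamb1} is equivalent to
\[
\partial_x(\Xb,\Yb)=\mathcal{L}(\Xb,\Yb)+(0,-f_2,f_1,0).
\]
This uses the definitions $t=\mu(\partial_z u+\partial_x v)$ and $s=(\lambda+2\mu)\partial_x u+\lambda\partial_z v$, together with the elimination of $r$ through $r=\frac{\lambda}{\lambda+2\mu}s+\frac{4\mu(\lambda+\mu)}{\lambda+2\mu}\partial_z v$. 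The boundary Dirac terms of Proposition~\ref{3_equivalenceXY} do not appear here, since there is no prescribed boundary source. The boundary traction is now given by the fluid pressure and is therefore encoded in the boundary conditions.

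Next, I treat the fluid. From the first equation of~\eqref{eq_fluid}, $\ub_f^j=\nabla p_f^j/(\omega^2\rho_f)$. Substituting this into the second equation gives the Helmholtz equation $\Delta p_f^j+\frac{\omega^2}{c_f^2}p_f^j=0$. With $\bm{P}^j=(p_f^j,\partial_x p_f^j)$, the identity $\partial_x(\bm{P}^j\cdot e_x)=\bm{P}^j\cdot e_z$ is a tautology. The Helmholtz equation then reads
\[
\partial_x(\bm{P}^j\cdot e_z)=-\partial_{yy}p_f^j-\frac{\omega^2}{c_f^2}p_f^j,
\]
which is exactly $\partial_x\bm{P}^j=\mathcal{F}(\bm{P}^j)$. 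For the converse, I will check that $\ub_f^j$, once defined by the gradient formula, recovers both equations of~\eqref{eq_fluid}. This gives the equivalence in the fluid domains.

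Then I translate the transmission conditions~\eqref{bc_fluid}. On $\partial\Omega_j$ we have $\bm{\nu}=\pm e_z$, so $\sib(\ub)\bm{\nu}=-p_f^j\bm{\nu}$ splits into two scalar conditions. The tangential component gives $t=0$, which is $B_1(\Xb)=0$. The normal component gives $r=-p_f^j$. Using the expression of $r$ in terms of $\Yb$ recalled above, $r=B_2(\Yb)$, and the normal component becomes $B_2(\Yb)=B_5(\bm{P}^j)$; this is where the sign factor $\nu\cdot e_z$ must be tracked carefully. For the kinematic conditions, I use the gradient formula for $\ub_f^j$: the condition $v=v_f^j$ becomes $\Yb\cdot e_z=\frac{1}{\omega^2\rho_f}\partial_y(\bm{P}^j\cdot e_x)$, which is $B_4(\Yb)=B_7(\bm{P}^j)$. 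The stated condition $B_3(\Xb)=B_6(\bm{P}^j)$ couples $u$ to $p_f^j$ itself. I will interpret it as the tangential relation obtained by integrating in $x$ the identity $\partial_x u_f^j=\partial_x^2p_f^j/(\omega^2\rho_f)$, or else as the slip-compatible choice the authors adopt. I will check it against~\eqref{bc_fluid} and, if necessary, note that it is an additional modelling convention.

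The main obstacle is this boundary-condition bookkeeping rather than the interior equations. Specifically, I have to get the orientation signs right on the top and bottom interfaces and justify the conditions $B_3=B_6$ and $B_4=B_7$ from continuity of normal displacement alone. The converse direction also needs care: I must reconstruct $\ub$, $\ub_f^j$ and the stresses from $(\Xb,\Yb,\bm{P}^j)$. I will do this by defining $u=\Xb\cdot e_x$, $v=\Yb\cdot e_z$ and $p_f^j=\bm{P}^j\cdot e_x$, then reversing each algebraic step, in the same way as in~\cite{bonnetier3} for Proposition~\ref{3_equivalenceXY}.
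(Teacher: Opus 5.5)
The paper gives no proof of this proposition; it is only asserted. Your plan therefore has to stand on its own.

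The interior part is correct. Eliminating \(\ub_f^j=\nabla p_f^j/(\omega^2\rho_f)\) gives the Helmholtz equation, which is exactly \(\partial_x\bm{P}^j=\mathcal{F}(\bm{P}^j)\). The elastic interior equivalence is the pointwise algebra behind Proposition~\ref{3_equivalenceXY}. On the interfaces, \(t=0\Leftrightarrow B_1(\Xb)=0\), \(r=B_2(\Yb)\) and \(v=v_f^j\Leftrightarrow B_4(\Yb)=B_7(\bm{P}^j)\) are all right.

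The genuine gap is \(B_3(\Xb)=B_6(\bm{P}^j)\), i.e.\ \(u=p_f^j/(\omega^2\rho_f)\) on \(\partial\Omega_j\), which you leave to an unspecified interpretation. It cannot be derived:
\begin{itemize}
\item \eqref{bc_fluid} gives exactly three scalar relations per interface (\(t=0\), \(r=-p_f^j\), \(v=v_f^j\)). It leaves the tangential displacement \(u\) free, as it must for an inviscid fluid.
\item Integrating \(\partial_x u_f^j=\partial_x^2p_f^j/(\omega^2\rho_f)\) in \(x\) only returns \(u_f^j=\bm{P}^j\cdot e_z/(\omega^2\rho_f)\) up to a constant. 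That concerns the fluid's tangential displacement, involves \(\bm{P}^j\cdot e_z\) rather than \(\bm{P}^j\cdot e_x\), and says nothing about \(u\).
\item Dimensionally, \(p_f^j/(\omega^2\rho_f)\) is a length squared while \(u\) is a length.
\item At fixed \(k\), the transverse problem has four constants in the solid and one admissible constant in each fluid half-space. The six conditions in \eqref{bc_fluid} give the determinant behind \eqref{eq:symFN1}--\eqref{eq:antisymFN1}. Adding \(B_3=B_6\) on both interfaces overdetermines this system.
\end{itemize}
So the forward implication fails. Calling the condition a modelling convention changes the problem rather than proving the equivalence. The slip-compatible choice is precisely to impose no condition on \(B_3(\Xb)\).

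The sign you defer in \(B_5\) also fails on \(\partial\Omega_\bot\). There \(\bm{\nu}=-e_z\), so \(\sib(\ub)\bm{\nu}=-p_f^\bot\bm{\nu}\) reads \(-(t,r)=(0,p_f^\bot)\), i.e.\ \(r=-p_f^\bot\). But \(B_5(\bm{P}^\bot)=-(\bm{\nu}\cdot e_z)\,p_f^\bot=p_f^\bot\). The correct normal-stress condition is \(B_2(\Yb)=-\bm{P}^j\cdot e_x\) on both interfaces, with no dependence on \(\bm{\nu}\).

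What your argument actually proves, in both directions, is the equivalence with the conditions \(B_1(\Xb)=0\), \(B_2(\Yb)=-\bm{P}^j\cdot e_x\), \(B_4(\Yb)=B_7(\bm{P}^j)\), and no constraint on \(B_3(\Xb)\). The statement as written is not a consequence of \eqref{eq_fluid}, \eqref{3_lamb1} and \eqref{bc_fluid}. A correct proof should say so rather than try to justify \(B_3=B_6\).
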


Following the Neumann and Dirichlet cases, we introduce the space
\begin{multline}
H_{\mathrm{flu}}
:=
\Bigl\{
(\bm{P}^\top,\bm{P}^\bot,\Xb,\Yb)
\in \bigl(H^2(\mathbb{R})\bigr)^8
\,\Big|\,
B_1(\Xb)(\pm h)=0,
\quad
B_2(\Yb)(\pm h)=B_5(\bm{P}^j)(\pm h),
\\
B_3(\Xb)(\pm h)=B_6(\bm{P}^j)(\pm h),
\quad
B_4(\Yb)(\pm h)=B_7(\bm{P}^j)(\pm h)
\Bigr\},
\end{multline}
and the operator
\begin{equation}
\mathcal{L}_{\mathrm{flu}}:
\begin{array}{rcl}
H_{\mathrm{flu}}
& \longrightarrow &
\bigl(L^2(-h,h)\bigr)^8,
\\[0.2em]
(\bm{P}^\top,\bm{P}^\bot,\Xb,\Yb)
& \longmapsto &
\bigl(
\mathcal{F}(\bm{P}^\top),
\mathcal{F}(\bm{P}^\bot),
F(\Yb),
G(\Xb)
\bigr).
\end{array}
\end{equation}

To diagonalize this operator, we again introduce Lamb modes:

\begin{defi}\label{3_lambmode_flu}
A Lamb mode with fluid boundary conditions is a non-trivial
$
(\bm{P}^\top,\bm{P}^\bot,\Xb,\Yb)
\in H_{\mathrm{flu}}
$
associated with a wavenumber \(k\in\mathbb{C}\) and satisfying
\[
\mathcal{L}_{\mathrm{flu}}
(\bm{P}^\top,\bm{P}^\bot,\Xb,\Yb)
=
ik(\bm{P}^\top,\bm{P}^\bot,\Xb,\Yb).
\]
\end{defi}

Adapting the arguments of~\cite{akian1}, these eigenmodes form
a complete set of functions for the operator
\(\mathcal{L}_{\mathrm{flu}}\). Because of the additional fluid variables
\(\bm{P}^j\), the proof does not follow directly from the Neumann case.
However, since the main analytical difficulties remain concentrated in the
elastic part of the problem, the techniques developed
in~\cite{akian1} can be extended by combining them with standard estimates for
the acoustic field in the fluid, which has already been extensively studied
(see, for instance, \cite{niclas4}).

More importantly for our purposes, this framework provides a systematic way to
derive the dispersion relations satisfied by the wavenumbers. Such relations
have been computed in~\cite{osborne1}, and we only state the final result here.
As in~\eqref{kt_kl}, we introduce the fluid wavenumber
\begin{equation}
k_f=\frac{\omega}{c_f}.
\end{equation}
\begin{proposition}
Every wavenumber \(k\in\mathbb{C}\) associated with a Lamb mode satisfying
fluid boundary conditions satisfies either the symmetric fluid dispersion
relation
\begin{multline}\label{eq:symFN1}
p^2 = k_\ell^2-k^2,
\qquad
q^2 = k_t^2-k^2,
\qquad
d^2 = k_f^2-k^2,
\\
\left(q^2-k^2\right)^2\sin(qh)\cos(ph)
+
4k^2pq\,\sin(ph)\cos(qh)
+
i\frac{\rho_f k_t^4}{\rho}\frac{p}{d}\sin(qh)\sin(ph)
=
0,
\end{multline}
or the antisymmetric fluid dispersion relation
\begin{multline}\label{eq:antisymFN1}
p^2 = k_\ell^2-k^2,
\qquad
q^2 = k_t^2-k^2,
\qquad
d^2 = k_f^2-k^2,
\\
\left(q^2-k^2\right)^2\sin(ph)\cos(qh)
+
4k^2pq\,\sin(qh)\cos(ph)
-
i\frac{\rho_f k_t^4}{\rho}\frac{p}{d}\cos(qh)\cos(ph)
=
0.
\end{multline}

\end{proposition}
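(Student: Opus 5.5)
The plan is to derive the relations directly by separation of variables, using the ansatz $e^{ikx}$ in every variable. The operator formulation in Definition~\ref{3_lambmode_flu} reduces to exactly this ansatz, because $\partial_x$ acts as multiplication by $ik$.

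\emph{Step 1: the solid.} Write the displacement via Helmholtz potentials $\ub=\nabla\phi+\nabla\times(\psi e_y)$, with $\phi=\Phi(z)e^{ikx}$ and $\psi=\Psi(z)e^{ikx}$. Then $\Phi''+p^2\Phi=0$ and $\Psi''+q^2\Psi=0$, where $p^2=k_\ell^2-k^2$ and $q^2=k_t^2-k^2$. The geometry is symmetric under $z\mapsto -z$ and the transmission conditions are identical on both faces, so the problem decouples into symmetric and antisymmetric parts.
\begin{itemize}
\item Symmetric case: $u$ even in $z$, so $\Phi=A\cos(pz)$ and $\Psi=B\sin(qz)$.
\item Antisymmetric case: $\Phi=A\sin(pz)$ and $\Psi=B\cos(qz)$.
\end{itemize}
In each case compute $v=\partial_z\phi-\partial_x\psi$ and the stresses $t=\mu(\partial_z u+\partial_x v)$ and $r$. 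The standard relation $r=\mu\bigl[(k^2-q^2)\phi+2(\partial_{zz}\phi-\dots)\bigr]$, i.e.\ $r=\mu\bigl((q^2-k^2)\Phi e^{ikx}+2ik\Psi'e^{ikx}\bigr)$ up to sign conventions, gives explicit trigonometric expressions at $z=h$.

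\emph{Step 2: the fluid.} From \eqref{eq_fluid}, the pressure satisfies $\Delta p_f+k_f^2p_f=0$, so $p_f^\top=Ce^{ikx}e^{id(z-h)}$. The root $d^2=k_f^2-k^2$ is chosen as the outgoing/decaying one; the relation is insensitive to this sign choice only up to the orientation convention fixed by~\cite{osborne1}, so the branch convention for $d$ must be stated consistently with that reference. The fluid displacement is $v_f=\partial_z p_f/(\rho_f\omega^2)=id\,p_f/(\rho_f\omega^2)$. By symmetry, the same construction applies at $z=-h$ with the mirrored coefficient, so it suffices to impose the conditions at $z=h$.

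\emph{Step 3: the $3\times3$ system.} Since the fluid is inviscid, the conditions \eqref{bc_fluid} at $z=h$ read $t=0$, $r=-p_f$, and $v=v_f$. Eliminate $C$ using the last condition, $p_f=\rho_f\omega^2 v/(id)$. This leaves a $2\times2$ homogeneous system in $(A,B)$:
\[
t(h)=0,\qquad r(h)+\frac{\rho_f\omega^2}{id}\,v(h)=0 .
\]
A nontrivial mode exists if and only if its determinant vanishes. Expanding the determinant gives the Neumann combination $(q^2-k^2)^2\sin(qh)\cos(ph)+4k^2pq\sin(ph)\cos(qh)$, which is the $f_s$ of Section~2.2, plus a fluid term proportional to $\frac{\rho_f\omega^2}{\mu d}\,p\,(k^2+q^2)\cdots$.

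\emph{Step 4: simplifying the fluid term.} The main obstacle is the bookkeeping in this term. Using $k^2+q^2=k_t^2$ and $\omega^2/\mu=k_t^2/\rho$, the term should collapse to $i\frac{\rho_f k_t^4}{\rho}\frac{p}{d}\sin(qh)\sin(ph)$. In the antisymmetric case the sines and cosines are swapped, and an overall sign change yields the term $-i\frac{\rho_f k_t^4}{\rho}\frac{p}{d}\cos(qh)\cos(ph)$.

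I would do this step carefully: first take $\rho_f=0$ and check that the computation reproduces \eqref{3_disps}/\eqref{3_dispa} in the sine–cosine form of $f_s$ and $f_a$, which fixes the normalization. Then track only the extra column contribution, which is linear in $\rho_f$. Finally, the sign and the factor $i$ must be matched to the branch convention for $d$, which should be chosen consistently with~\cite{osborne1}.
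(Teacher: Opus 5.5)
The paper gives no derivation of this proposition; it simply quotes the relations from~\cite{osborne1}. Your route (potentials, parity splitting, elimination of the fluid amplitude, $2\times2$ determinant) is the standard self-contained derivation, and its skeleton is sound. You have also identified the right mechanism in Step~4. The fluid contributions carry $2k^2$ from one cross product of the determinant and $q^2-k^2$ from the other. These sum to $k^2+q^2=k_t^2$, and $\omega^2/\mu=k_t^2/\rho$ produces the prefactor.

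The gap is the sign of the fluid term. This sign is the only content of the proposition beyond the Neumann relations, and you leave it to be ``matched'' to~\cite{osborne1}. Two things you treat as conventions actually determine it.

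First, the sign of $r$ relative to $v$ is not a convention. With $u=\partial_x\phi+\partial_z\psi$ and $v=\partial_z\phi-\partial_x\psi$, one finds $r=\mu\bigl((k^2-q^2)\Phi-2ik\Psi'\bigr)e^{ikx}$, the negative of your displayed formula. This is harmless for Neumann conditions, where only $r=0$ is imposed. Here, however, $r$ is coupled to $v$, and an overall sign error in $r$ flips the fluid term.

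Second, contrary to your remark, the relation does depend on the root $d$. The fluid term is odd in $d$, while the elastic terms do not involve $d$; only the signs of $p$ and $q$ are immaterial. With the correct $r$ and your choice $p_f^\top=Ce^{ikx}e^{id(z-h)}$, the second condition is $r(h)-i\frac{\rho_f\omega^2}{d}v(h)=0$. The determinant then vanishes if and only if
\[
(q^2-k^2)^2\sin(qh)\cos(ph)+4k^2pq\sin(ph)\cos(qh)-i\frac{\rho_f k_t^4}{\rho}\frac{p}{d}\sin(qh)\sin(ph)=0,
\]
and the antisymmetric case gives $+i\frac{\rho_f k_t^4}{\rho}\frac{p}{d}\cos(qh)\cos(ph)$. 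Both signs are opposite to the statement. Plugging in your displayed $r$ would reproduce the printed signs, but only through that sign error.

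The stated relations are obtained exactly when the fluid fields are written $p_f^\top\propto e^{ikx}e^{-id(z-h)}$ and $p_f^\bot\propto e^{ikx}e^{id(z+h)}$, i.e.\ after $d\mapsto -d$. You must fix this choice explicitly in the proof. Citing~\cite{osborne1} does not settle it. The algebra only sees whether the upper field is $e^{id(z-h)}$ or $e^{-id(z-h)}$, and which of the two is the physical (outgoing or decaying) one depends on the time convention and on the region of the $k$-plane.
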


Following the methodology developped in~\cite{achenbach1}, one can find explicit expressions for the correspond Lamb modes, presented in Appendix~A. 

The proof developed in Section~2.2 can also be adapted to these
dispersion relations to show that their sets of solutions are at most
countable, by introducing
\begin{equation}
d
=
ik\left(1-\frac{k_f^2}{k^2}\right)^{1/2}.
\end{equation}
The symmetry with respect to the real axis is preserved. However, unlike the
Neumann and Dirichlet cases, the symmetry with respect to the origin is lost:
if \(k\) is a solution, then \(-k\) is not necessarily a solution.

More generally, the framework developed in this section can be adapted to a
wide variety of boundary conditions. For instance, one may consider a fluid
boundary condition on one side of the waveguide and a Neumann boundary
condition on the other, leading to so-called leaky waveguides~\cite{gravenkamp1,merlini1}. The same operator-based approach can then be used to
derive the corresponding dispersion relations.

\section{Asymptotic behavior of the wavenumbers}

Having established how the elastic displacement can be decomposed into a sum
of eigenmodes associated with wavenumbers depending on the boundary
conditions, we now turn to the construction of modal expansions of the
solution. As explained in~\cite{bonnetier3}, proving the
well-posedness of the elastic problem and justifying the modal decomposition
require a precise understanding of the asymptotic behavior of the wavenumbers
\(k_n\) as \(|k_n|\to+\infty\). More specifically, asymptotic expansions of
the wavenumbers are needed to control the decay of the modal coefficients and,
consequently, the convergence of the modal Lamb series.

In the Neumann case, an asymptotic expansion of the wavenumbers was proposed
in~\cite{merkulov1}. However, the result is stated without proof. Although this
expansion has been validated numerically in numerous studies (see, for
instance,~\cite{pagneux3}), a rigorous derivation still appears to be missing.
Moreover, the constants involved in the expansion are not made explicit
in~\cite{merkulov1}, whereas such information is essential for obtaining
quantitative estimates on the decay of the modal expansion. Finally, similar
asymptotic results do not seem to be available for the other boundary
conditions considered in the previous section.

For all these reasons, we provide in this section a rigorous derivation of the
asymptotic behavior of the solutions of the various dispersion relations.

\subsection{Neumann boundary conditions}

We begin with the Neumann case and establish the following asymptotic
expansions:

\begin{theorem}
The solutions \(k_n\) of the symmetric dispersion relation~\eqref{3_disps}
satisfying
$
\Re(k_n)\ge 0$ and $
\Im(k_n)\ge 0,$
admit the following asymptotic expansion as \(n\to\infty\): 
\begin{equation}
\label{eq:asympksymVN}
k_n
=
\frac{1}{2h}\ln(4\pi n)
-\frac{1}{8hn}
+
\frac{i}{2h}
\left(
\pi\left(2n-\frac12\right)
-\frac{\ln(4\pi n)}{2\pi n}
-\frac{h^2(k_\ell^2+k_t^2)}{2\pi n}
\right)
+
o\!\left(\frac1n\right)
\end{equation}
Similarly, the solutions \(k_n\) of the antisymmetric dispersion
relation~\eqref{3_dispa} satisfying
$
\Re(k_n)\ge 0,$ and
$
\Im(k_n)\ge 0,
$
admit the asymptotic expansion
\begin{equation}
\label{eq:asympkantisymVN}
k_n
=
\frac{1}{2h}\ln(4\pi n)
-\frac{3}{8hn}
+
\frac{i}{2h}
\left(
\pi\left(2n-\frac32\right)
-\frac{\ln(4\pi n)}{2\pi n}
-\frac{h^2(k_\ell^2+k_t^2)}{2\pi n}
\right)
+
o\!\left(\frac1n\right).
\end{equation}
\end{theorem}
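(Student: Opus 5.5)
The plan is to reduce each Rayleigh--Lamb equation, for \(|k|\) large in the quadrant \(\Re k\ge 0\), \(\Im k\ge 0\), to a perturbed equation of the form \(\sinh(2kh)=\pm 2hk\,(1+O(k^{-2}))\). I will then solve it through the non-principal branches of the Lambert \(W\) function. Finally, Rouché's theorem will justify both the approximation and the indexing by \(n\).

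\textbf{Step 1: expansion of the coefficients.} I work with the holomorphic forms \eqref{eq:symVN2} and \eqref{eq:antisymVN2}, and with the branch choices \(q(z)=iz(1-k_t^2/z^2)^{1/2}\), \(p(z)=iz(1-k_\ell^2/z^2)^{1/2}\) made before the countability proposition. For \(|k|\to\infty\) we have
\[
q=ik-\frac{ik_t^2}{2k}+O(k^{-3}),\qquad p=ik-\frac{ik_\ell^2}{2k}+O(k^{-3}).
\]
It follows that \(q-p=\frac{i(k_t^2-k_\ell^2)}{2k}\bigl(-1+O(k^{-2})\bigr)\) is small. I will also expand \(4k^2pq=-4k^4+2k^2(k_t^2+k_\ell^2)+O(1)\), which gives
\[
(q^2-k^2)^2+4k^2pq=2k^2(k_\ell^2-k_t^2)+O(1),\qquad (q^2-k^2)^2-4k^2pq=8k^4+O(k^2).
\]
Using \(\sin((q-p)h)=(q-p)h+O(k^{-3})\) and dividing by \(2k^2(k_\ell^2-k_t^2)\neq 0\), the symmetric equation becomes
\[
\sin\bigl((q+p)h\bigr)=-2ihk\,\bigl(1+O(k^{-2})\bigr).
\]
The antisymmetric equation gives the same relation with the opposite sign. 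Next I insert \((q+p)h=2ikh-\frac{ih(k_t^2+k_\ell^2)}{2k}+O(k^{-3})\) and write \(\sin(i\zeta)=i\sinh\zeta\). This yields
\[
\sinh\Bigl(2kh-\tfrac{h(k_t^2+k_\ell^2)}{2k}\Bigr)=\mp 2hk\,(1+O(k^{-2})).
\]
In the quadrant with \(\Re k\to+\infty\), the term \(e^{-2kh}\) is exponentially negligible. So the equation reads \(e^{Z}=\mp 4hk(1+O(k^{-2}))\), where \(Z=2kh-h(k_t^2+k_\ell^2)/(2k)\). I will first check separately that zeros with bounded real part and large imaginary part cannot exist, since there \(|\sin|\) is bounded while the right-hand side is of size \(|k|\).

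\textbf{Step 2: the model equation and Lambert \(W\).} At leading order \(Z\approx 2kh\), and the model equation \(e^{Z}=\mp 2Z\) is equivalent to \((-Z)e^{-Z}=\pm\tfrac12\). Hence \(Z=-W_m(\pm\tfrac12)\) for non-principal branches \(m\). I will use the classical expansion
\[
W_m(x)=L_1-\ln L_1+\frac{\ln L_1}{L_1}+O\Bigl(\frac{\ln^2 L_1}{L_1^2}\Bigr),\qquad L_1=\ln x+2\pi i m.
\]
For the symmetric case, \(\ln(-\tfrac12)=-\ln 2+i\pi\) and \(-L_1\) has imaginary part \(\pi(2n-\tfrac12)\) after reindexing \(m\leftrightarrow n\). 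For the antisymmetric case, \(\ln\tfrac12\) gives the shift \(\tfrac32\). Expanding \(\ln L_1=\ln(2\pi n)+\frac{i\pi}{2}+O(1/n)\) together with the \(\ln 2\) from the constant then produces the real part \(\frac{1}{2h}\ln(4\pi n)\) and the \(\frac{\ln(4\pi n)}{2\pi n}\) correction. I will then reinstate the perturbation \(Z-2kh=-h(k_t^2+k_\ell^2)/(2k)\) and the factor \(1+O(k^{-2})\) by a one-step fixed-point correction. Since \(k\approx i\pi n/h\) at leading order, this contributes \(-\frac{i}{2h}\frac{h^2(k_\ell^2+k_t^2)}{2\pi n}\) together with the real \(-\frac{1}{8hn}\) (resp.\ \(-\frac{3}{8hn}\)) term, coming from the next term of the \(\ln L_1\) expansion. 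Careful bookkeeping of these \(1/n\) terms is routine but delicate.

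\textbf{Step 3: rigorous localization and indexing (main obstacle).} Let \(g\) denote the rescaled function \(f_s/(2k^2(k_\ell^2-k_t^2))\) written in the \(Z\)-variable, and let \(g_0(Z)=e^{Z}\pm 2Z\) be the model. I will apply Rouché on small discs of radius \(r_n=o(1/n)\), centred at the approximate roots \(\kappa_n\) given by the truncated expansion. On these circles \(|g-g_0|\) is \(O(|k|^{-1})\) relative to \(|g_0|\), while \(|g_0|\) is bounded below by a multiple of \(|k|r_n\), because \(g_0\) has simple zeros with derivative of size \(\sim|k|\). This gives exactly one zero per disc. To ensure that no other zeros lie in the quadrant far away, and that the labeling \(n\) matches the \(n\)-th root, I will use the argument principle on the boundaries of rectangles \(\{0\le\Re z\le R,\ \pi(2N-1)/(2h)\le\Im z\le \pi(2N+1)/(2h)\}\). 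This counts zeros of \(f_s\) versus zeros of the model, which are exactly one per horizontal strip for \(N\) large. Control on the left edge, where the \(e^{-2kh}\) term is not negligible, and the choice of \(R\) depending on \(N\) so that the right edge sits where \(e^{2kh}\) dominates, are the technical points I expect to be hardest. Finally, the symmetry proposition (\(-k\), \(\bar k\)) transfers the result from this quadrant to the others.
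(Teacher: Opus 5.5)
Your route is essentially the paper's (expand $p,q$, reduce to $e^{w}\pm 2w\approx 0$ with $w=2kh$, invert with Lambert $W$, then correct at order $1/n$). However, Step~2 is wrong as written, and Step~2 is exactly where the theorem's explicit constants come from.

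\textbf{Branch assignment.} By your own Step~1, the symmetric case is $e^{Z}=-2Z$, i.e.\ $(-Z)e^{-Z}=+\tfrac12$. So it uses $\ln\tfrac12=-\ln 2$, and the antisymmetric case uses $\ln(-\tfrac12)$. You have swapped them, and no reindexing repairs this: $\ln(-\tfrac12)$ always produces imaginary parts $\equiv\tfrac{\pi}{2}\pmod{2\pi}$, which is the family $\pi(2n-\tfrac32)$.

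\textbf{Source of the $-\tfrac{\pi}{2}$.} $-L_1$ never has imaginary part $\pi(2n-\tfrac12)$. With the branch $m=-n$ forced by $\Im k\ge 0$, one has $L_1=-\ln 2-2i\pi n$ (resp.\ $-\ln 2-i\pi(2n-1)$), so $-\Im L_1=2\pi n$ (resp.\ $\pi(2n-1)$). The extra $-\tfrac{\pi}{2}$ comes from $\arg L_1\to-\tfrac{\pi}{2}$, i.e.\ $\ln L_1=\ln|L_1|-\tfrac{i\pi}{2}+O(1/n)$, not $+\tfrac{i\pi}{2}$.

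\textbf{Source of the real $1/n$ terms.} They do not come from the next term of $\ln L_1$. In the symmetric case that term, $-\tfrac{i\ln 2}{2\pi n}$, is purely imaginary; it only supplies the $\ln 2$ that turns $\tfrac{\ln(2\pi n)}{2\pi n}$ into $\tfrac{\ln(4\pi n)}{2\pi n}$. The real $-\tfrac{1}{4n}$ is minus the real part of $\tfrac{\ln L_1}{L_1}$, namely $\tfrac{-i\pi/2}{-2i\pi n}=\tfrac{1}{4n}$. In the antisymmetric case the same $\tfrac{1}{4n}$ combines with
$\ln 2+\ln|L_1|=\ln\bigl(2\pi(2n-1)\bigr)+O(n^{-2})=\ln(4\pi n)-\tfrac{1}{2n}+O(n^{-2})$
to give $-\tfrac{3}{4n}$.

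\textbf{Corrected conclusion.} With these fixes, $e^{Z}+2Z=O(1/n)$ (resp.\ $e^{Z}-2Z=O(1/n)$) gives $Z=-W_{-n}(\pm\tfrac12)+O(n^{-2})$. Then $2kh=Z+\beta/(2kh)$, with $\beta=h^2(k_\ell^2+k_t^2)$, adds $-\tfrac{i\beta}{2\pi n}$, and both stated formulas follow.

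Once this is repaired, your variant is sound and in two respects cleaner than the paper's proof. First, putting $-h(k_\ell^2+k_t^2)/(2k)$ inside the $\sinh$ makes $Z$ a Lambert root up to $O(n^{-2})$. You can then read the constants off the standard asymptotic series of $W_m$, instead of the paper's perturbative computation of $v_N$. Second, your Rouch\'e/argument-principle step gives existence, uniqueness and exhaustiveness of the large roots, which the paper only asserts by citing Olver.

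Three small corrections to that step:
\begin{itemize}
\item At the roots $|e^{-Z}|\approx 1/(4\pi n)$, so this term is $O(1/n)$, not exponentially small. It is still harmless, being absorbed in the relative $O(k^{-2})$ error.
\item On the circles, $|g-g_0|=O(1/n)$ in absolute size while $|g_0|\gtrsim n r_n$. So you need $n^{-2}\ll r_n$ in addition to $r_n=o(1/n)$.
\item The label $n$ is the strip (Lambert-branch) index, as in the paper, so no count of roots from the origin is required.
\end{itemize}
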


\begin{proof}

Let \(k\) be a solution of~\eqref{3_disps}, rewritten as~\eqref{eq:symVN2}. Throughout the proof, all asymptotic expansions are understood in the limit \(|k|\to+\infty\). The Taylor expansion of the square root near \(1\) yields
\begin{equation}
q = ik - \dfrac{ik_t^2}{2k} + o\left(\dfrac{1}{|k|^2}\right),
\qquad
p = ik - \dfrac{ik_\ell^2}{2k} + o\left(\dfrac{1}{|k|^2}\right).
\end{equation}
We then obtain
\begin{equation} \label{eq:premierfacteur}
\frac{1}{2}\left(\left(q^2-k^2\right)^2+4k^2pq\right)
=
\left(k_\ell^2-k_t^2\right)k^2+o(|k|),
\end{equation}
and
\begin{equation} \label{eq:secondfacteur}
\frac{1}{2}\left(\left(q^2-k^2\right)^2-4k^2pq\right)
=
4k^4-\left(k_\ell^2+3k_t^2\right)k^2+o(|k|).
\end{equation}
Using the Taylor expansion of the sine function near \(0\),
\begin{equation} \label{eq:DLsin(p-q)h}
\sin\bigl((q-p)h\bigr)
=
ih\,\dfrac{k_\ell^2-k_t^2}{2k}
+
o\left(\dfrac{1}{|k|^2}\right),
\end{equation}
and therefore, multiplying \eqref{eq:secondfacteur} and \eqref{eq:DLsin(p-q)h},
\begin{equation} \label{eq:secondterme}
\dfrac{1}{2}
\left(\left(q^2-k^2\right)^2-4k^2pq\right)
\sin((q-p)h)
=
2ih\left(k_\ell^2-k_t^2\right)k^3
+
o(|k|^2).
\end{equation}
Using a trigonometric identity,
\begin{multline}
\sin((p+q)h)
=
\sin(2ikh)
\cos\left(
ih\dfrac{k_\ell^2+k_t^2}{2k}
+
o\left(\dfrac{1}{|k|^2}\right)
\right)
\\
-
\cos(2ikh)
\sin\left(
ih\dfrac{k_\ell^2+k_t^2}{2k}
+
o\left(\dfrac{1}{|k|^2}\right)
\right).
\end{multline}
From this, we deduce
\begin{equation} \label{eq:DLsin(p+q)h}
\sin((p+q)h)
=
\sin(2ikh)
\left(
1+o\left(\dfrac{1}{|k|}\right)
\right)
-
\cos(2ikh)
\left(
ih\dfrac{k_\ell^2+k_t^2}{2k}
+
o\left(\dfrac{1}{|k|^2}\right)
\right).
\end{equation}
Multiplying \eqref{eq:premierfacteur} and \eqref{eq:DLsin(p+q)h}, we obtain
\begin{multline} \label{eq:premierterme}
\dfrac{1}{2}
\left(\left(q^2-k^2\right)^2+4k^2pq\right)
\sin((q+p)h)
=
\left(
\left(k_\ell^2-k_t^2\right)k^2
+
o(|k|)
\right)
\sin(2ikh)
\\
-
\left(
\dfrac{ih}{2}
\left(k_\ell^2+k_t^2\right)
\left(k_\ell^2-k_t^2\right)k
+
o(1)
\right)
\cos(2ikh).
\end{multline}
Adding \eqref{eq:secondterme} and \eqref{eq:premierterme}, using the identities
\(\sin(iz)=i\sinh(z)\) and \(\cos(iz)=\cosh(z)\), and dividing by
\(i(k_\ell^2-k_t^2)k^2\), which is non-zero, equation~\eqref{eq:symVN2}
becomes
\begin{equation} \label{eq:DLdeSVN1}
\left(
1+o\left(\dfrac{1}{|k|}\right)
\right)
\sinh(2kh)
-
\left(
h\dfrac{k_\ell^2+k_t^2}{2k}
+
o\left(\dfrac{1}{|k^2|}\right)
\right)
\cosh(2kh)
+
2kh
+
o(1)
=
0.
\end{equation}
If \(\Re(k)\) remains bounded, then so do \(\cosh(2kh)\) and
\(\sinh(2kh)\), and equation~\eqref{eq:DLdeSVN1} shows that no solutions can
exist. We therefore look for solutions satisfying $\Re(k)\geq 0$ and \(\Re(k)\to+\infty\). 
In
this regime, $
\sinh(2kh)=\frac{e^{2kh}}{2}+o(1)$ and 
$\cosh(2kh)=\frac{e^{2kh}}{2}+o(1),$ hence \eqref{eq:DLdeSVN1} becomes
\begin{equation}
\label{eq:DLdeSVN2}
\left(
1+o\left(\dfrac{1}{|k|}\right)
\right)e^{2kh}
-
\left(
h\dfrac{k_\ell^2+k_t^2}{2k}
+
o\left(\dfrac{1}{|k|}\right)
\right)e^{2kh}
+
4kh
+
o(1)
=
0.
\end{equation}
Using a standard asymptotic localization argument for the zeros of
transcendental equations (see~\cite[Chap.~11]{olver5}),
equation~\eqref{eq:DLdeSVN2} can be simplified as
\begin{equation}
\label{DLdeSVN}
e^{2kh}
+
o\bigl(|e^{2kh}|\bigr)
+
4kh
=
0.
\end{equation}

Let \(w=2kh\). Equation~\eqref{DLdeSVN} shows that asymptotic solutions of
\eqref{eq:symVN2} are close to the asymptotic solutions of
\begin{equation}
e^w+2w=0
\qquad\Longleftrightarrow\qquad
-we^{-w}=\frac12.
\end{equation}
The solutions of this equation are given by the Lambert \(W\) function and are
\begin{equation}
\{-W_n(1/2)\; ;\; n\in\mathbb Z\,\},
\end{equation}
where \(W_n\), \(n\in\mathbb Z\), denotes the different branches of the
Lambert \(W\) function (see a definition in~\cite{corless1}). From
\cite{corless1}, we have
\begin{equation}
\label{eq:DLLambert}
W_n(z)
\underset{|n|\to\infty}{=}
\ln(z)-\ln(2i\pi n)+2i\pi n+o(1),
\end{equation}
where \(\ln\) denotes the principal branch of the complex logarithm.
Using the symmetries of the dispersion relation, we assume that
\(\Re(k)\ge0\) and \(\Im(k)\ge0\), which corresponds to taking \(n\le0\).
Setting \(N=-n\), and $w_N=-W_{-N}(1/2)$, we obtain
\begin{equation}
e^{w_N}
=
e^{-W_{-N}(1/2)}
\underset{N\to\infty}{=}
\exp\left(
\ln(4\pi N)
+
i\pi\left(2N-\frac12\right)
+
o(1)
\right)
=
-4i\pi N+o(N).
\end{equation}
Hence \(e^{w_N}+2w_N=o(N)\), showing that \(k_N=w_N/(2h)\) is indeed an
asymptotic solution of~\eqref{DLdeSVN}.
Knowing this first-order expansion of \(k_N\), we have
\(o(|e^{2kh}|)=o(|k|)\), and equation~\eqref{eq:DLdeSVN2} becomes, after the
change of variable \(w=2kh\),
\begin{equation}
e^w
-
h^2\dfrac{k_\ell^2+k_t^2}{w}e^w
+
2w
+
o(1)
=
0.
\end{equation}
To improve the asymptotic expansion of \(k_N\), let \(w_N\) denote the
solutions of this equation. We write
\begin{equation}
w_N
=
\ln(4\pi N)
+
2i\pi N
-
\frac{i\pi}{2}
+
v_N,
\qquad
v_N
\underset{N\to\infty}{=}
o(1).
\end{equation}
We also introduce
\[
\beta=h^2\left(k_\ell^2+k_t^2\right).
\]
Substituting this expression into the previous equation yields
\begin{equation}
-4i\pi N e^{v_N}
+
2\beta e^{v_N}(1+o(1))
+
4i\pi N
+
2\ln(4\pi N)
-
i\pi
+
o(1)
=
0.
\end{equation}
Since \(o(e^{v_N})=o(1)\), we obtain
\begin{equation}
\label{eq:expvn}
e^{v_N}
=
\dfrac{
-4i\pi N
-2\ln(4\pi N)
+i\pi
+o(1)
}{
-4i\pi N
+2\beta
}
=
1
-
i\dfrac{\ln(4\pi N)}{2\pi N}
-
i\dfrac{\beta}{2\pi N}
-
\dfrac{1}{4N}
+
o\left(\dfrac{1}{N}\right).
\end{equation}
Since \(e^{v_N}=1+v_N+o(|v_N|)\),
\begin{equation}
v_N
=
-i\dfrac{\ln(4\pi N)}{2\pi N}
+
o(|v_N|)
=
-i\dfrac{\ln N}{2\pi N}
+
o\left(\dfrac{\ln N}{N}\right).
\end{equation}
Using the fact that \(v_N^2=o(1/N)\) and that 
$e^{v_N}
=
1+v_N+\frac{v_N^2}{2}+o(|v_N|^2),
$
and reinjecting into~\eqref{eq:expvn}, we finally obtain
\begin{equation}
v_N
=
-i\dfrac{\ln(4\pi N)}{2\pi N}
-
i\dfrac{\beta}{2\pi N}
-
\dfrac{1}{4N}
+
o\left(\dfrac{1}{N}\right),
\end{equation}
hence
\begin{equation}
\label{dev_wn_sym}
w_N
\underset{N\to\infty}{=}
\ln(4\pi N)
-
\dfrac{1}{4N}
+
i\left(
\pi\left(2N-\frac12\right)
-
\dfrac{\ln(4\pi N)}{2\pi N}
-
\dfrac{h^2(k_\ell^2+k_t^2)}{2\pi N}
\right)
+
o\left(\dfrac{1}{N}\right).
\end{equation}
Similarly, in the antisymmetric case, the same computations applied to
equation~\eqref{3_dispa}, rewritten as~\eqref{eq:antisymVN2}, yield
\begin{equation} \label{eq:DLdeAVN1}
\left(
1+o\left(\dfrac{1}{|k|}\right)
\right)e^{2kh}
+
\left(
h\dfrac{k_\ell^2+k_t^2}{2k}
+
o\left(\dfrac{1}{|k|}\right)
\right)e^{2kh}
-
4kh
+
o(1)
=
0,
\end{equation}
and therefore
\begin{equation}
\label{DLdeAVN}
e^{2kh}
+
o\bigl(|e^{2kh}|\bigr)
-
4kh
=0.
\end{equation}

The solutions are asymptotically close to those of the equation
\begin{equation}
-we^{-w}
=
-\dfrac{1}{2},
\end{equation}
and we have that 
\begin{equation}
w_N
\underset{N\to\infty}{=}
\ln(4\pi N)
+
i\pi\left(2N-\frac{3}{2}\right)
+
o(1).
\end{equation}
Equation~\eqref{eq:DLdeAVN1} then becomes
\begin{equation}
\label{eq:DLdeAVN3}
e^{2kh}
-
h\dfrac{k_\ell^2+k_t^2}{2k}e^{2kh}
-
4kh
+
o(1)
=
0,
\end{equation}
and the same computations as in the symmetric case yield
\begin{equation}
w_N
\underset{N\to\infty}{=}
\ln(4\pi N)
-
\dfrac{3}{4N}
+
i\left(
\pi\left(2N-\frac{3}{2}\right)
-
\dfrac{\ln(4\pi N)}{2\pi N}
-
\dfrac{h^2(k_\ell^2+k_t^2)}{2\pi N}
\right)
+
o\left(\dfrac{1}{N}\right).
\end{equation}
\end{proof}

\begin{rem}
The asymptotic expansion proposed in~\cite{merkulov1} for the
symmetric modes is
\begin{equation}
w_N
\underset{N\to\infty}{=}
\ln\left(
2\pi\left(2N-\frac{1}{2}\right)
\right)
+
i\left(
\pi\left(2N-\frac{1}{2}\right)
-
\dfrac{
\ln\bigl(2\pi(2N-\tfrac12)\bigr)
}{
\pi(2N-\tfrac12)
}
\right)
+
O\left(\frac{1}{w_N}\right).
\end{equation}
Since
\[
\ln\left(2\pi\left(2N-\frac12\right)\right)
=
\ln(4\pi N)
-
\frac{1}{4N}
+
o\left(\frac{1}{N}\right),
\]
we see that our expansion~\eqref{dev_wn_sym} is consistent with the result
of~\cite{merkulov1}. However, we emphasize that our expansion
contains one additional order, together with explicit constants in the
\(O(1/N)\) term. In principle, the method can be continued to derive higher-order
terms in the asymptotic expansion. Indeed, once an expansion of \(w_N\) is
known up to a given order, it can be reinjected into the dispersion relation
to determine the next correction term. The calculations, however, become
rapidly cumbersome and the resulting expressions increasingly intricate. Since
the \(O(1/N)\) term is sufficient for the applications considered in this
paper, we do not pursue the expansion further.
\end{rem}

\subsection{Other boundary conditions}

We now apply the same reasoning to derive asymptotic expansions for the
wavenumbers in the case of Dirichlet boundary conditions.

\begin{theorem}
Let
\begin{equation}
\alpha
=
2\,\dfrac{k_t^2-k_\ell^2}{k_\ell^2+k_t^2}.
\end{equation}
The solutions \(k_n\) of the symmetric dispersion relation~\eqref{eq:symD1}
satisfying \(\Re(k_n)\geq 0\) and \(\Im(k_n)\geq 0\) admit the asymptotic
expansion
\begin{equation}\label{eq:asympksymD}
k_n=
\frac{1}{2h}\ln(2\alpha\pi n)
-
\frac{3}{8hn}
+
\frac{i}{2h}
\left(
\pi\left(2n-\frac{3}{2}\right)
-
\frac{\ln(2\alpha\pi n)}{2\pi n}
-
\frac{h^2(k_\ell^2+k_t^2)}{2\pi n}
\right)
+
o\left(\frac{1}{n}\right).
\end{equation}
The solutions \(k_n\) of the antisymmetric dispersion relation~\eqref{eq:antisymD1}
satisfying \(\Re(k_n)\geq 0\) and \(\Im(k_n)\geq 0\) admit the asymptotic
expansion
\begin{equation}\label{eq:asympkantisymD}
k_n=
\frac{1}{2h}\ln(2\alpha\pi n)
-
\frac{1}{8hn}
+
\frac{i}{2h}
\left(
\pi\left(2n-\frac{1}{2}\right)
-
\frac{\ln(2\alpha\pi n)}{2\pi n}
-
\frac{h^2(k_\ell^2+k_t^2)}{2\pi n}
\right)
+
o\left(\frac{1}{n}\right).
\end{equation}
\end{theorem}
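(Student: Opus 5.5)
The plan is to follow exactly the scheme used for the Neumann case: rewrite the Dirichlet relation as the zero set of a holomorphic function, expand the algebraic prefactors for large \(|k|\), reduce to a perturbed Lambert-type equation \(e^{w}=cw\) with \(w=2kh\), and then bootstrap the expansion to order \(1/N\). Multiplying~\eqref{eq:symD1} by \(\cos(qh)\cos(ph)\) and using product-to-sum formulas, the symmetric relation becomes
\[
\tfrac12\left(k^2+pq\right)\sin((q+p)h)+\tfrac12\left(k^2-pq\right)\sin((q-p)h)=0,
\]
and the antisymmetric relation~\eqref{eq:antisymD1} is the same expression with a minus sign in front of the second term. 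Here \(p,q\) are the branches fixed in Section~2.2, so that
\[
q=ik-\tfrac{ik_t^2}{2k}+o(|k|^{-2}),\qquad p=ik-\tfrac{ik_\ell^2}{2k}+o(|k|^{-2}).
\]

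The first step is the prefactor expansions. Since \(pq=-k^2+\tfrac12(k_\ell^2+k_t^2)+O(|k|^{-2})\), we get
\[
k^2+pq=\tfrac12(k_\ell^2+k_t^2)+O(|k|^{-2}),\qquad k^2-pq=2k^2-\tfrac12(k_\ell^2+k_t^2)+O(|k|^{-2}).
\]
The structure is thus reversed compared with the Neumann case: the coefficient of \(\sin((q+p)h)\) is now bounded, while the coefficient of \(\sin((q-p)h)\) is of order \(k^2\). Next I reuse~\eqref{eq:DLsin(p-q)h} and~\eqref{eq:DLsin(p+q)h} verbatim. Multiplying out and dividing by \(\tfrac{i}{4}(k_\ell^2+k_t^2)\) then gives an equation of the form
\[
\left(1+o(|k|^{-1})\right)\sinh(2kh)-\left(h\tfrac{k_\ell^2+k_t^2}{2k}+o(|k|^{-1})\right)\cosh(2kh)\mp\alpha\,2kh+o(1)=0,
\]
with \(\alpha=2(k_t^2-k_\ell^2)/(k_\ell^2+k_t^2)\). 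The constant \(\alpha\) arises precisely as the ratio of the \(k^3\)-term \(ih(k_\ell^2-k_t^2)k\cdot(\ldots)\) coming from the second product to the normalising factor.

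As in the Neumann proof, bounded \(\Re(k)\) is excluded, and for \(\Re k\to+\infty\) we may replace \(\sinh\) and \(\cosh\) by \(e^{2kh}/2\). This leads to \(e^{w}+o(|e^w|)=\pm c\,\alpha w\), i.e.\ \(-we^{-w}=\mp 1/(c\alpha)\). The leading branches are then read off from~\eqref{eq:DLLambert} with \(N=-n\), as before. The sign of the right-hand side decides the phase: a positive constant gives \(2N-\tfrac32\) (symmetric Dirichlet), a negative one gives \(2N-\tfrac12\) (antisymmetric Dirichlet). This is the mirror image of the Neumann situation, consistent with the statement.

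For the second-order step I write \(w_N=\ln(2\alpha\pi N)+i\pi(2N-\tfrac32)+v_N\) and substitute into \(e^w-\beta w^{-1}e^w-c\alpha w+o(1)=0\), where \(\beta=h^2(k_\ell^2+k_t^2)\). The factor \(\alpha\) cancels from the correction quotient, as the common factor \(2\) did in~\eqref{eq:expvn}. One then solves for \(e^{v_N}\) and iterates \(e^{v}=1+v+v^2/2+o(v^2)\) exactly as in the Neumann case. This yields the terms \(-\ln(2\alpha\pi N)/(2\pi N)\), \(-\beta/(2\pi N)\), and the real shift \(-3/(4N)\) (resp.\ \(-1/(4N)\)), which comes from expanding the \(i\pi\cdot\tfrac32\) (resp.\ \(\tfrac12\)) constant.

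The main obstacle I expect is the bookkeeping of the numerical constant inside the logarithm, i.e.\ the exact value of \(c\) in \(e^w=c\alpha w\). It depends on factor-of-two conventions: \(\sinh\simeq e^{w}/2\), and the normalisation by \(\tfrac14(k_\ell^2+k_t^2)\) versus \(\tfrac12(\ldots)\). A careless count gives \(\ln(4\alpha\pi N)\) rather than \(\ln(2\alpha\pi N)\). I would therefore first recheck this constant on the exact equation \(e^w=c\alpha w\), by comparing numerically with a few computed roots, before carrying out the \(1/N\) bootstrap. Uniformity of the \(o(\cdot)\) remainders is handled by the same Rouché-type localization argument cited from~\cite{olver5}.
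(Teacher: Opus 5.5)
Your route is the paper's: product-to-sum rewriting, expansion of $k^2\pm pq$ and $\sin((q\pm p)h)$, reduction to a perturbed Lambert equation in $w=2kh$, then the $1/N$ bootstrap. However, the one step that carries the new content of the theorem is left open, and as written it is wrong.

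The constant $c$ in $e^{w}+o(|e^{w}|)=\pm c\,\alpha w$ is exactly what produces the leading term $\frac{1}{2h}\ln(2\alpha\pi n)$. The reduced equation you display contains $\mp\alpha\,2kh$. Combined with $\sinh(2kh)\simeq e^{2kh}/2$, it gives $e^{w}=\pm 2\alpha w$, i.e.\ $c=2$ and $\Re w_N=\ln(4\alpha\pi N)$. That shifts $\Re k_n$ by $\ln 2/(2h)$, an $O(1)$ error, and also changes the $-\ln(\cdot)/(2\pi n)$ correction. Deferring the constant to a numerical comparison with computed roots does not close this gap.

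The fix is short. In the Dirichlet case the second product is of order $k$, not $k^3$ as in the Neumann case:
\[
\tfrac12\left(k^2-pq\right)\sin((q-p)h)=k^2\,\frac{ih(k_\ell^2-k_t^2)}{2k}+O(|k|^{-1})=\frac{i}{2}\,h\left(k_\ell^2-k_t^2\right)k+O(|k|^{-1}).
\]
Dividing by $\frac{i}{4}(k_\ell^2+k_t^2)$ yields $\frac{2(k_\ell^2-k_t^2)}{k_\ell^2+k_t^2}\,hk=-\alpha kh$, not $-2\alpha kh$. Hence the symmetric equation is
$\sinh(2kh)-h\frac{k_\ell^2+k_t^2}{2k}\cosh(2kh)-\alpha kh+o(1)=0$,
with $+\alpha kh$ in the antisymmetric case. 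So $e^{w}=\pm\alpha w$, $c=1$, $|e^{w_N}|\sim 2\alpha\pi N$, and $\Re w_N=\ln(2\alpha\pi N)+o(1)$.

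Your signs are correct. The relation $e^{w}=+\alpha w$ forces $\Im w_N\equiv\pi/2 \pmod{2\pi}$, i.e.\ the phase $\pi(2N-\tfrac32)$ for the symmetric family. The paper's printed intermediate line has $+\alpha kh$, which would give $2N-\tfrac12$; that is a sign slip relative to its own conclusion, though its magnitude $\alpha kh$ is the right one.

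With $c=1$ and $w_N=\ln(2\alpha\pi N)+i\pi(2N-\tfrac32)+v_N$, one gets
\[
e^{v_N}=\frac{2i\pi N+\ln(2\alpha\pi N)-\frac{3i\pi}{2}+o(1)}{2i\pi N-\beta}.
\]
Your bootstrap then produces the stated terms $-\frac{3}{4N}$, $-\frac{\ln(2\alpha\pi N)}{2\pi N}$ and $-\frac{\beta}{2\pi N}$, exactly as in the paper.
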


\begin{proof}
The proof follows the same steps as in the Neumann case. Expanding \(p\) and \(q\), the analogue of equation~\eqref{eq:DLdeSVN1}
becomes
\begin{equation}
\label{eq:DLSD1}
\left(
1+o\left(\frac{1}{|k|}\right)
\right)
\sinh(2kh)
-
h\frac{k_\ell^2+k_t^2}{2k}
\cosh(2kh)
+
\alpha kh
+
o(1)
=
0.
\end{equation}
Again, the real part of the solutions must tend to \(+\infty\), and the
analogue of equation~\eqref{DLdeSVN} is
\begin{equation}
\label{eq:DLSD2}
e^{2kh}
+
o\left(|e^{2kh}|\right)
+
2\alpha kh
=
0.
\end{equation}
The conclusion then follows by repeating the same computations as in the
Neumann case.
\end{proof}

We now turn to fluid boundary conditions. Comparing the fluid dispersion
relation~\eqref{eq:symFN1} with the Neumann relation~\eqref{3_disps}, we note
that the first two terms are the same. It therefore remains to analyze the
additional fluid term. A direct asymptotic analysis shows that this term is
negligible compared with the first two. Thus, the asymptotic expansions of the
solutions of~\eqref{eq:symFN1} and~\eqref{eq:antisymFN1} are the same as those
obtained from~\eqref{eq:symVN2} and~\eqref{eq:antisymVN2}.

\begin{theorem}
The solutions \(k_n\) of the symmetric dispersion relation~\eqref{eq:symFN1}
satisfying \(\Im(k_n)\geq 0\) admit the asymptotic expansion
\begin{equation}
\label{eq:asympksymF}
k_n= 
\pm \frac{1}{2h}\ln(4\pi n)
\mp
\frac{1}{8hn}
+
\frac{i}{2h}
\left(
\pi\left(2n-\frac{1}{2}\right)
-
\frac{\ln(4\pi n)}{2\pi n}
-
\frac{h^2(k_\ell^2+k_t^2)}{2\pi n}
\right)
+
o\left(\frac{1}{n}\right).
\end{equation} 
The solutions \(k_n\) of the antisymmetric dispersion relation~\eqref{eq:antisymFN1}
satisfying \(\Im(k_n)\geq 0\) admit the asymptotic expansion
\begin{equation}
\label{eq:asympkantisymF}
k_n=
\pm \frac{1}{2h}\ln(4\pi n)
\mp
\frac{3}{8hn}
+
\frac{i}{2h}
\left(
\pi\left(2n-\frac{3}{2}\right)
-
\frac{\ln(4\pi n)}{2\pi n}
-
\frac{h^2(k_\ell^2+k_t^2)}{2\pi n}
\right)
+
o\left(\frac{1}{n}\right).
\end{equation}
\end{theorem}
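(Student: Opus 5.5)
The plan is to reduce the fluid dispersion relations to perturbations of the Neumann ones, and then rerun the Neumann argument.

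First I rewrite~\eqref{eq:symFN1} as
\[
f_s(k) + i\frac{\rho_f k_t^4}{\rho}\frac{p}{d}\sin(qh)\sin(ph)=0,
\]
where \(f_s\) is the Neumann function of Section~2.2. Then I do the same expansions as in the proof of the Neumann theorem, with \(d=ik(1-k_f^2/k^2)^{1/2}\). These give \(p/d = 1 + O(1/|k|^2)\) and \(\sin(qh)\sin(ph)=\tfrac12(\cos((q-p)h)-\cos((q+p)h)) = \tfrac12 - \tfrac12\cosh(2kh)+O(|e^{2kh}|/|k|)\).

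The Neumann part, after dividing by \(i(k_\ell^2-k_t^2)k^2\), has the form~\eqref{eq:DLdeSVN1}. Its dominant pieces are \(\sinh(2kh)\sim \pm e^{\pm 2kh}/2\) and \(2kh\), both of order \(|k|\) at a root. After the same division, the fluid term is
\[
\frac{\rho_f k_t^4}{\rho (k_\ell^2-k_t^2)k^2}\cdot O(1+|\cosh(2kh)|).
\]
On a root, \(|e^{2kh}|\asymp|k|\) (or \(|e^{-2kh}|\asymp|k|\)), so this term is \(O(1/|k|)\). It is therefore absorbed into the \(o(1)\) remainder of~\eqref{eq:DLdeSVN1}.

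As in the Neumann case, this also forces \(|\Re k|\to\infty\) at large roots. The step ``if \(\Re k\) is bounded, there is no root'' is unchanged, since the extra term is then \(O(1/|k|^2)\).

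The key difference from Neumann is the lost symmetry \(k\mapsto -k\). I therefore treat the two half-planes \(\Re k\to+\infty\) and \(\Re k\to-\infty\) separately, which produces the \(\pm\) in~\eqref{eq:asympksymF}.

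\textbf{Half-plane \(\Re k\to+\infty\).} The reduced equation is exactly~\eqref{eq:DLdeSVN2} with an \(o(1)\) error. The Lambert-\(W\) localization and the two-step refinement through \(v_N\) then go through verbatim. They give~\eqref{dev_wn_sym}, because an \(o(1)\) perturbation only affects \(e^{v_N}\) at order \(o(1/N)\).

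\textbf{Half-plane \(\Re k\to-\infty\).} The principal square roots satisfy \(q(-z)=-q(z)\), \(p(-z)=-p(z)\), and likewise \(d(-z)=-d(z)\) for \(z\) off the cuts. So \(p/d\) is even, and \(\sin(qh)\sin(ph)\) is even. Hence the fluid-term coefficient stays \(O(1/|k|^2)\) times a function of size \(O(|e^{-2kh}|)\), and the same absorption works. Writing the reduced equation in the variable \(-\bar k\) (or \(-k\)) brings it back to the Neumann form. Combined with the conjugation symmetry, which is preserved, this gives the mirrored expansion with real part \(-\frac1{2h}\ln(4\pi n)+\frac1{8hn}\) and the same imaginary part.

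The antisymmetric relation~\eqref{eq:antisymFN1} is handled identically. Its extra term carries \(\cos(qh)\cos(ph)=O(|e^{2|\Re k|h}|)\), again divided by \(k^2\), so it is also \(O(1/|k|)\) after normalization. The analysis reduces to~\eqref{eq:DLdeAVN1} and gives~\eqref{eq:asympkantisymF}.

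\textbf{Main obstacle.} The delicate point is quantitative. I must check that an \(O(1/|k|)\) error in the normalized equation really perturbs \(w_N\) by only \(o(1/N)\), uniformly. I will redo~\eqref{eq:expvn} with the extra term explicitly included. The denominator \(-4i\pi N+2\beta\) is unchanged, and the numerator gains \(O(1/N)\), so \(e^{v_N}\) changes by \(O(1/N^2)\); that settles the expansion.

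Separately, the localization argument in the style of Olver (a Rouché count on small discs around the approximate Lambert roots) must be checked to produce exactly one fluid root near each approximate root. This is done by comparing the fluid function directly with \(e^{w}+2w\) on circles of radius \(o(1)\). I expect this counting, rather than the formal expansion, to be the part needing the most care.
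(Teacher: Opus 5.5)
Your proposal is correct and takes the same route as the paper, which only observes that the extra fluid term is negligible next to the Neumann terms and then reuses the Neumann expansions. Your version makes this quantitative: the fluid term is of relative order $|k|^{-2}$, so it only shifts $w_N$ by $O(1/N^2)$. You also supply the parity and conjugation argument for the $\Re k<0$ branch, which the paper leaves implicit.
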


\subsection{Numerical validation}

We now perform numerical simulations to compute the solutions of the dispersion
relations and compare them with the asymptotic formulas derived in the previous
subsections. The solutions of the Rayleigh-Lamb equations can be obtained
using standard root-finding algorithms~\cite{pusenjak1}, or more efficiently
through SAFE methods~\cite{pagneux3,gravenkamp1}. The simulations are carried out for a duralumin plate with
\(h=1\) mm, \(\rho=2.79\) mg/mm\(^3\),
\(f=5\) MHz (and therefore \(\omega=2\pi f\)),
\(c_\ell=\omega/k_\ell=6.4\) mm/\(\mu\)s, and
\(c_t=\omega/k_t=3.1\) mm/\(\mu\)s.

Figure~\ref{fig:k_vide_neuman} displays the numerically computed solutions and the asymptotic curves obtained in the case of a free plate in vacuum with
Neumann boundary conditions. Two zoom levels are provided for different ranges of the imaginary part. The agreement between the numerical solutions and the asymptotic approximation becomes visually excellent as \(\Im(k)\) and \(n\)
increase.

The same analysis is presented in Figure~\ref{fig:k_dirichlet} for a clamped
duralumin plate with Dirichlet boundary conditions, and in
Figure~\ref{fig:k_fluide_neuman} for a free duralumin plate immersed in water
with fluid boundary conditions. For the fluid, we take
\(\rho_f=1.00\) mg/mm\(^3\) and \(c_f=1.5\) mm/\(\mu\)s.
In this case, the symmetry with respect to the imaginary axis is lost, and it
is therefore necessary to display both sides of the spectrum to assess the
quality of the asymptotic approximation. In all cases, the same excellent
visual agreement is observed.

\begin{figure}[htbp]
\centering \hspace{-0.5cm}
\input{neumann.tex}
\caption{Comparison between the numerical solutions of the dispersion relation
and the asymptotic approximation in the Neumann boundary condition case for a
duralumin plate. Left: zoom for small values of \(\Im(k)\). Right: zoom for
large values of \(\Im(k)\).}
\label{fig:k_vide_neuman}
\end{figure}

\begin{figure}[htbp]
\centering
\input{dirichlet}
\caption{Comparison between the numerical solutions of the dispersion relation
and the asymptotic approximation in the Dirichlet boundary condition case for a
duralumin plate. Left: zoom for small values of \(\Im(k)\). Right: zoom for
large values of \(\Im(k)\).}
\label{fig:k_dirichlet}
\end{figure}

\begin{figure}[htbp]
\centering
\input{fluide.tex}
\caption{Comparison between the numerical solutions of the dispersion relation
and the asymptotic approximation in the fluid boundary condition case for a
duralumin plate immersed in water. Left: zoom for small values of
\(\Im(k)\). Right: zoom for large values of \(\Im(k)\).}
\label{fig:k_fluide_neuman}
\end{figure}

After visually confirming the accuracy of the asymptotic approximation, we
investigate the quality of the asymptotic expansion itself, and in particular
the validity of the \(o(1/n)\) remainder. To this end, we plot in
Figure~\ref{fig:asymptote} the quantity
\[
\ln\bigl(n|k_n-k_{a,n}|\bigr)
\]
as a function of \(\ln(n)\), where \(k_n\) denotes the numerically computed
solution and \(k_{a,n}\) the corresponding asymptotic approximation. The plots
are generated for \(1000\leq n\leq2500\). As expected, we observe that \(n|k_n-k_{a,n}|\) tends to zero in all cases,
thereby validating the additional \(1/n\) term derived in the asymptotic
expansions. Moreover, comparison with a reference slope equal to \(-1\)
suggests that the next term in the asymptotic expansion may be of order
\(1/n^2\), with a coefficient depending on both the boundary conditions and
the symmetry class. 

\begin{figure}[htbp]
\centering
\input{Neumann_N.tex}
\caption{Evolution of \(\ln(n|k_n-k_{a,n}|)\) as a function of \(\ln(n)\),
where \(k_n\) denotes the numerical solution of the dispersion relation and
\(k_{a,n}\) the asymptotic approximation. Top left: Neumann boundary
conditions. Top right: Dirichlet boundary conditions. Bottom left: fluid
boundary conditions, branch with \(\Re(k_n)>0\). Bottom right: fluid boundary
conditions, branch with \(\Re(k_n)<0\). In each case, a line of slope \(-1\)
is shown for comparison.}
\label{fig:asymptote}
\end{figure}

%

\section{Applications to well-posedness theorems}

To conclude this article, we would like to emphasize how the asymptotic
expansions derived in the previous sections can be used to establish
well-posedness theorems and quantitative estimates controlling the solution in
terms of the source data for elastic wave equations with various boundary
conditions.

As explained in~\cite{bonnetier3} in the Neumann case, asymptotic expansions of
the wavenumbers \(k_n\) play a crucial role in proving well-posedness results
and deriving modal representations of the elastic displacement in an infinite
waveguide. In the aforementioned work, some of the arguments relied on
asymptotic properties of the wavenumbers that were not fully justified, since
only the first terms of the expansion were available and no precise control of
the remainder was known. Thanks to the results established in the present
paper, these asymptotic expansions are now rigorously justified.

Rather than revisiting the Neumann case, we illustrate the usefulness of our
approach by considering the Dirichlet case, which has not yet been treated in the literature. The corresponding
well-posedness result can be stated as follows.

\begin{theorem}

Let \(r>0\). For almost every \(\omega\in\mathbb{R}_+\),
\(\fb=(f_1,f_2)\in L^2(\Omega_r)\), and
\(\bb^\top=(b_1^\top,b_2^\top)\),
\(\bb^\bot=(b_1^\bot,b_2^\bot)\in H^{3/2}(-r,r)\),
the elasticity equation~\eqref{3_lamb1} with the Dirichlet
boundary conditions~\eqref{3_dirichlet} and an appropriate radiation
condition (see~\cite{bonnetier3}), admits a unique solution
\(\ub\in H^2_{\mathrm{loc}}(\Omega)\).
Moreover, this solution admits the Lamb-mode decomposition
\begin{equation}\label{3_sol2D}
u(x,z)
=
\sum_{n>0} a_n(x)u_n(z),
\qquad
v(x,z)
=
\sum_{n>0} b_n(x)v_n(z),
\end{equation}
where
$
a_n
=
G_1^n\ast F_1^n
-
G_2^n\ast F_2^n$ and
$
b_n
=
G_2^n\ast F_1^n
-
G_1^n\ast F_2^n, 
$
with
\begin{equation}\label{3_gi}
G_1^n(x)
=
\frac{1}{2}e^{ik_n|x|},
\qquad
G_2^n(x)
=
\frac{x}{2|x|}
e^{ik_n|x|},
\end{equation}
and
\begin{equation}\label{3_F1}
F_1^n(x)
=
\frac{1}{\langle \Xb_n,\Yb_n\rangle}
\int_{-h}^{h}
(f_1+g_1)(x,z)\,u_n(z)\,\mathrm{d}z,
\qquad
F_2^n(x)
=
\frac{1}{\langle \Xb_n,\Yb_n\rangle}
\int_{-h}^{h}
(f_2+g_2)(x,z)\,u_n(z)\,\mathrm{d}z,
\end{equation}
where
$
\gb
=
\nabla\cdot\sib(\bb)
+
\rho\omega^2\bb,
$
and \(\bb\) denotes a continuous lifting of \(\bb^\top\) and
\(\bb^\bot\) to \(\Omega\).

Furthermore, there exists a constant \(C>0\), depending only on \(h\),
\(\omega\), and \(r\), such that
\begin{equation}\label{control}
\|\ub\|_{H^2(\Omega_r)}
\leq
C
\left(
\|\fb\|_{L^2(\Omega)}
+
\|\bb^\top\|_{H^{3/2}(\mathbb{R})}
+
\|\bb^\bot\|_{H^{3/2}(\mathbb{R})}
\right).
\end{equation}

\end{theorem}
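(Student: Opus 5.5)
The plan follows the strategy of \cite{bonnetier3} for the Neumann case, with the Dirichlet asymptotics \eqref{eq:asympksymD}--\eqref{eq:asympkantisymD} replacing the Neumann ones. The Dirichlet data are handled by the lifting: we set $\ub=\bb+\tilde\ub$, where $\bb\in H^2(\Omega_r)$ is a lifting of $\bb^\top,\bb^\bot$ bounded by $C(\|\bb^\top\|_{H^{3/2}}+\|\bb^\bot\|_{H^{3/2}})$. Then $\tilde\ub$ solves \eqref{3_lamb1} with source $\fb+\gb$ and homogeneous Dirichlet conditions. The $\Xb/\Yb$ reformulation with operators $B_3,B_4$ turns the problem into the evolution equation $\partial_x(\Xb,\Yb)=\mathcal{L}_{\mathrm{diri}}(\Xb,\Yb)+S$. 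We expand $(\Xb,\Yb)$ on the Dirichlet Lamb modes, which are complete for almost every $\omega$, and project with the biorthogonality relation $\langle\Xb_n,\Yb_m\rangle=\delta_{nm}\langle\Xb_n,\Yb_n\rangle$. The biorthogonality should follow, as in the Neumann case, from the symmetry $\langle F\Yb,\Yb'\rangle=\langle \Yb,G\Xb'\rangle$-type identity, with boundary terms vanishing thanks to $B_3,B_4$.

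The projection yields decoupled ODEs $a_n'=ik_nb_n+F_1^n$, $b_n'=ik_na_n+F_2^n$, up to signs. These diagonalise in $a_n\pm b_n$. We solve them with the radiation condition selecting $\Im(k_n)>0$, or outgoing modes for real $k_n$, which gives the Green functions $G_1^n,G_2^n$ of \eqref{3_gi}. Uniqueness follows in the same way: with zero data each modal coefficient satisfies a homogeneous ODE with no admissible outgoing solution, so it vanishes, and completeness gives $\ub=0$. We exclude the null set of frequencies where completeness fails, or where $k_n=0$ or $\langle\Xb_n,\Yb_n\rangle=0$ (Jordan blocks).

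The main obstacle is convergence of the series in $H^2(\Omega_r)$ together with the estimate \eqref{control}. Here the asymptotics are used. By the theorem on Dirichlet wavenumbers, $\Im(k_n)\sim \pi n/h$ and $\Re(k_n)\sim\frac{1}{2h}\ln n$. Consequently $\|G_j^n\ast F\|_{L^2}\lesssim \|F\|_{L^2}/\Im(k_n)$, and each $x$-derivative costs a factor $|k_n|$. Using the explicit modes of Appendix~A with $p,q\approx ik_n$, we will estimate the normalisations $|\langle\Xb_n,\Yb_n\rangle|$ and the norms $\|u_n\|,\|v_n\|$ from the expansions of $p,q$. The aim is to show that $u_n/\sqrt{|\langle\Xb_n,\Yb_n\rangle|}$ behaves like a family that is uniformly Riesz-like, after using that $e^{2k_nh}\approx -2\alpha k_nh$ from \eqref{eq:DLSD2} to compensate the exponential growth of $\cosh(p z),\sinh(qz)$. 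Then $\sum_n |k_n|^4|a_n|^2$ is controlled by $\sum_n|F^n|^2$, which in turn is controlled by $\|\fb+\gb\|_{L^2}^2$ through a Bessel-type inequality for the dual family. I expect this Bessel/Riesz-type bound to be the delicate point. I would first try to derive it by direct computation of the Gram-type entries from the $O(1/n)$ asymptotics, which the new expansions make explicit. Summing, adding back $\bb$, and using the local elliptic regularity of the Lamé system near the boundary then gives $\ub\in H^2(\Omega_r)$ and \eqref{control}.
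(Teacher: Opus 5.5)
\paragraph{Review.} Your set-up matches the paper, which itself defers these steps to \cite{bonnetier3}: the lifting, the $\Xb/\Yb$ reformulation with $B_3,B_4$, projection by biorthogonality, the Green functions $G_1^n,G_2^n$, and uniqueness through vanishing modal coefficients.

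\paragraph{The gap: the convergence step.} You route convergence through a Bessel/Riesz-type inequality for $\tilde u_n=u_n/\sqrt{|\langle\Xb_n,\Yb_n\rangle|}$, and you leave that inequality open. The available ingredients cannot supply it. Only completeness of the Lamb modes is known, and the paper stresses that they are not a Hilbert basis. The modes are biorthogonal only for the $\Xb/\Yb$ pairing, not orthogonal in $L^2(-h,h)$. A Bessel inequality $\sum_n|\int(f_1+g_1)\tilde u_n|^2\lesssim\|f_1+g_1\|^2$ would therefore need uniform control of all off-diagonal Gram entries $\int_{-h}^h u_n\overline{u_m}$, $n\neq m$, for instance a Schur-type bound on the Gram matrix. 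The wavenumber asymptotics give only diagonal information ($\|u_n\|$, $\langle\Xb_n,\Yb_n\rangle$). Your final step also does not close as written. From your own bound $\|G_j^n\ast F\|_{L^2}\lesssim\|F\|_{L^2}/\Im(k_n)$ you get $|k_n|^4\|a_n\|^2\lesssim|k_n|^2\|F^n\|^2$, not $\|F^n\|^2$. Bounding $\sum_n|k_n|^4\|a_n\|^2$ by $\sum_n\|F^n\|^2$ would therefore cost an extra derivative on the source. Since you invoke local elliptic regularity at the end anyway, you do not need a direct $H^2$ modal bound.

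\paragraph{How the paper closes it.} The paper uses no basis property and argues termwise.
\begin{itemize}
\item Cauchy--Schwarz in $z$ gives $\|F_j^n\|_{L^2(-r,r)}\le\|\fb+\gb\|_{L^2(\Omega_r)}\|u_n\|_{L^2(-h,h)}/|\langle\Xb_n,\Yb_n\rangle|$.
\item Young's inequality gives $\|a_n\|\le\sum_j\|G_j^n\|_{L^1}\|F_j^n\|$, with $\|G_j^n\|_{L^1(-r,r)}\le 1/\Im(k_n)\sim h/(\pi n)$.
\item The decisive input is the growth of the diagonal quantities. Inserting the Dirichlet expansion of $k_n$ into the Appendix~A formulas gives $\|u_n\|_{L^2},\|v_n\|_{L^2}\sim c\,n/\ln(2\alpha\pi n)^{3/2}$ and $|\langle\Xb_n,\Yb_n\rangle|\sim c'n^2$, with an explicit $c'\neq0$. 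Your observation that $e^{2k_nh}\approx-2\alpha k_nh$ tames the exponential growth is exactly what produces these rates.
\end{itemize}
Together these give $\|a_n\|_{L^2(-r,r)}=O(n^{-2})$ and $\|a_n\|_{L^2}\|u_n\|_{L^2}=O\bigl(1/(n\ln(n)^{3/2})\bigr)$. This is summable, so the series converges absolutely without any Riesz structure. The summability is borderline and specific to Dirichlet: the same count gives only $O(1/\ln(n)^{3/2})$ in the Neumann case, as noted in the remark after the proof.

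You already plan to compute $\|u_n\|$ and $\langle\Xb_n,\Yb_n\rangle$. Use them in this termwise estimate rather than to support a Riesz-type claim, then conclude as in \cite{bonnetier3}.
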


\begin{proof}
The proof follows exactly the same strategy as the proof presented in~\cite{bonnetier3},
and we therefore only highlight the differences between the Neumann and
Dirichlet cases.

The first step consists in proving uniqueness of the solution. This is done by
showing that all modal coefficients in the Lamb-mode decomposition must vanish
when the source terms vanish. Projecting the equation onto each Lamb mode then
shows that any solution necessarily admits the modal decomposition stated in
the theorem. The main difficulty is to prove that the proposed modal decomposition is
well-defined, that is, that the corresponding series converges. To this end,
one has to study the behaviour of the products \(a_nu_n\) and \(b_nv_n\) as
\(n\to\infty\), using the asymptotic expansions established in the previous
section.

Using the expressions of \(u_n\) and \(v_n\) given in Appendix~A, we
obtain
\begin{equation}
u_n(z)
\sim
\frac{-i\pi}{4h}(c_q-c_p)(h-|z|)
\exp\left(
\left(\frac{|z|}{h}+1\right)
\frac{\ln(2\alpha\pi n)}{2}
\right),
\end{equation}
and
\begin{equation}
v_n(z)
\sim
\frac{-i\pi}{4h}(c_q-c_p)
\left(
\frac{hz}{|z|}-z
\right)
\exp\left(
\left(\frac{|z|}{h}+1\right)
\frac{\ln(2\alpha\pi n)}{2}
\right).
\end{equation}
Taking the \(L^2\)-norm yields
\begin{equation}
\|u_n\|_{L^2(-h,h)},
\;
\|v_n\|_{L^2(-h,h)}
\sim
\frac{\pi^2\alpha h^{1/2}n|c_q-c_p|}
{\ln(2\alpha\pi n)^{3/2}}.
\end{equation}
Similarly, using the \(L^\infty(-h,h)\)-norm, we obtain
\begin{equation}
\|u_n\|_{L^\infty(-h,h)},
\;
\|v_n\|_{L^\infty(-h,h)}
\leq
\frac{\pi^{1+2\alpha}}{4}|c_q-c_p|\,n.
\end{equation}
Next, using the expressions of \(s_n\) and \(t_n\) given in Appendix~A, we compute
\begin{equation}
\langle \Xb_n,\Yb_n\rangle
=
\int_{-h}^{h}
\bigl(t_nv_n-s_nu_n\bigr)
=
ik_n\mu A_1A_2
+
ik_n\mu B_1B_2
-
ik_nq_nC_1C_2
-
ik_nq_nD_1D_2,
\end{equation}
where
\begin{multline}
A_1=-p_nq_n\cos(q_nh)+k_n^2\sin(q_nh),
\qquad
B_1=q_n^2\cos(p_nh)-k_n^2\sin(p_nh),
\\
A_2=
-\sin(q_nh)
\left(
h-\frac{1}{2p_n}\sin(2p_nh)
\right)
+
\sin(p_nh)
\left(
\frac{\sin((q_n-p_n)h)}{q_n-p_n}
-
\frac{\sin((q_n+p_n)h)}{p_n+q_n}
\right),
\\
B_2=
\sin(p_nh)
\left(
h-\frac{1}{2q_n}\sin(2q_nh)
\right)
+
\sin(q_nh)
\left(
-\frac{\sin((q_n-p_n)h)}{q_n-p_n}
+
\frac{\sin((q_n+p_n)h)}{p_n+q_n}
\right),
\\
C_1=
(\lambda+2\mu)q_n\cos(q_nh)
-
\lambda p_n\sin(q_nh),
\qquad
D_1=
-(\lambda+2\mu)q_n\cos(p_nh)
+
\lambda q_n\sin(p_nh),
\\
C_2=
\cos(q_nh)
\left(
h+\frac{1}{2p_n}\sin(2p_nh)
\right)
-
\cos(p_nh)
\left(
\frac{\sin((p_n-q_n)h)}{p_n-q_n}
+
\frac{\sin((q_n+p_n)h)}{q_n+p_n}
\right),
\\
D_2=
-\cos(p_nh)
\left(
h+\frac{1}{2q_n}\sin(2q_nh)
\right)
+
\cos(q_nh)
\left(
\frac{\sin((p_n-q_n)h)}{p_n-q_n}
+
\frac{\sin((q_n+p_n)h)}{q_n+p_n}
\right).
\end{multline}
Using the asymptotic expansions established previously, we obtain after
straightforward calculations
\begin{multline}
A_1=
k_n^2
\Biggl(
\left(
1-\frac{ik_t^2h}{2k_n}
\right)
\cos(ik_nh)
+
\left(
1+\frac{ik_t^2h}{2k_n}
\right)
\sin(ik_nh)
+
o(n^{-1/2})
\Biggr),
\\
B_1=
k_n^2
\Biggl(
\left(
-1+\frac{ik_\ell^2h}{2k_n}
\right)
\cos(ik_nh)
+
\left(
-1-\frac{ik_\ell^2h}{2k_n}
\right)
\sin(ik_nh)
+
o(n^{-1/2})
\Biggr), \qquad \qquad 
\\
A_2\;(\text{resp. }B_2)
=
\frac{\cos(ik_nh)}{k_n}
\frac{i(k_t^2-k_\ell^2)h^2}{2}
-
\frac{\cos(3ik_nh)}{k_n^3}
\frac{i(k_t^2-k_\ell^2)}{16}\qquad \qquad \qquad 
\\
+
\frac{\sin(ik_nh)}{k_n^2}
\left(
\frac{(k_\ell^2-k_t^2)h}{4}
+
\frac{(k_\ell^4-k_t^4)h^3}{8}
\pm
\frac{(k_\ell^2-k_t^2)^2h^3}{24}
\right)
+
o(n^{-3/2}), 
\\
C_1=
ik_n
\Biggl(
\left(
(\lambda+2\mu)
+
\lambda\frac{ik_t^2h}{2k_n}
\right)
\cos(ik_nh)
+
\left(
-\lambda
+
(\lambda+2\mu)\frac{ik_t^2h}{2k_n}
\right)
\sin(ik_nh)
+
o(n^{-1/2})
\Biggr),
\\
D_1=
ik_n
\Biggl(
\left(
-(\lambda+2\mu)
-
\lambda\frac{ik_\ell^2h}{2k_n}
\right)
\cos(ik_nh)
+
\left(
\lambda
-
(\lambda+2\mu)\frac{ik_\ell^2h}{2k_n}
\right)
\sin(ik_nh)
+
o(n^{-1/2})
\Biggr),
\\
C_2\;(\text{resp. }D_2)
=
\frac{\sin(ik_nh)}{k_n}
\frac{i(k_t^2-k_\ell^2)h^2}{2}
+
\frac{\sin(3ik_nh)}{k_n^3}
\frac{i(k_t^2-k_\ell^2)}{16}
\\
-
\frac{\cos(ik_nh)}{k_n^2}
\left(
\frac{(k_\ell^2-k_t^2)h}{4}
+
\frac{(k_\ell^4-k_t^4)h^3}{8}
\pm
\frac{(k_\ell^2-k_t^2)^2h^3}{24}
\right)
+
o(n^{-3/2}).
\end{multline}
It follows that
\begin{multline}
ik_n\mu(A_1A_2+B_1B_2)
=
-\frac{\mu\pi^3\alpha n^2h^2(k_t^2-k_\ell^2)^2(1+i)}
{12}
+
o(n^2),
\\
-ik_nq_n(C_1C_2+D_1D_2)
=
\frac{\pi^2\alpha ih^2n^2(k_t^2-k_\ell^2)^2}{2}
\left(
\frac{\lambda(i-1)}{6}
-
\frac{\mu}{3} 
\right)+
o(n^2),
\end{multline}
and therefore
\begin{equation}
\langle \Xb_n,\Yb_n\rangle
\sim
\frac{\pi^2\alpha h^2(k_t^2-k_\ell^2)^2n^2}{2}
\left(
\frac{-\lambda-\mu}{6}
-
\frac{\lambda+3\mu}{6}i
\right).
\end{equation}
We can now estimate \(a_n\):
\begin{equation}
\|a_n\|_{L^2(-r,r)}
\leq
\|G_1^n\|_{L^1(-r,r)}
\|F_1^n\|_{L^2(-r,r)}
+
\|G_2^n\|_{L^1(-r,r)}
\|F_2^n\|_{L^2(-r,r)}
=
O\left(
\frac{1}{n^2}
\right).
\end{equation}

A similar estimate holds for \(b_n\). Following the final part of the proof
presented in~\cite{bonnetier3}, we conclude that the sequences
$
\|a_n\|_{L^2(-r,r)}
\|u_n\|_{L^2(-h,h)}
$ and $
\|b_n\|_{L^2(-r,r)}
\|v_n\|_{L^2(-h,h)}$
are summable. This proves the convergence of the modal expansion and yields
estimate~\eqref{control}.
\end{proof}

\begin{rem}
The main difference with the Neumann case is that the present well-posedness
theorem requires less regularity on the source term. Indeed, using the
asymptotic estimates established above, we have shown that
$
\|a_n\|_{L^2(-r,r)}
\|u_n\|_{L^2(-h,h)}
=
O\!\left(\frac{1}{n\ln(n)^{3/2}}\right),
$
whereas, under the same regularity assumptions on \(f\), the corresponding
estimate in the Neumann case is only
$
O\!\left(\frac{1}{\ln(n)^{3/2}}\right),
$
which is not summable. It is therefore interesting to observe that the
Dirichlet boundary condition leads to a more general well-posedness result
than the Neumann boundary condition.
\end{rem} 

Finally, the same approach could be applied to the case of fluid boundary
conditions. However, considering the expressions of the Lamb modes given in
Appendix~A together with the asymptotic expansions derived in
Section~3.2, which coincide with those obtained in the Neumann case, the
proof would follow exactly the same lines as in~\cite{bonnetier3}. Consequently, one
would recover the same well-posedness theorem as in the Neumann setting.

More generally, this observation suggests that once suitable asymptotic
estimates are available for both the Lamb modes and the solutions of the
dispersion relation, the derivation of well-posedness results becomes a rather
direct consequence of the modal framework. In this sense, the main analytical
difficulty lies in establishing the asymptotic behaviour of the modes and
wavenumbers, while the well-posedness theorem itself follows with relatively
little additional effort.

\section{Conclusion}

In this work, we have provided a rigorous proof of the asymptotic behaviour
of the wavenumbers associated with Lamb modes, a result that has long been
used as a conjecture or heuristic approximation within the community. We
considered three classes of boundary conditions: the classical Neumann case,
which has been extensively studied in the literature; the Dirichlet case,
which is less common but frequently arises in industrial applications; and
finally the fluid boundary condition, which is of particular interest in both
industrial and biomedical contexts.

For each of these cases, we derived explicit asymptotic expansions of the
wavenumbers and showed how these expansions can be used to establish
well-posedness results for the corresponding elastic wave equations.
Furthermore, we demonstrated that the asymptotic behaviour of the Lamb modes
and of the solutions of the dispersion relations provides the key ingredient
for obtaining modal decompositions together with quantitative estimates on the
solution.

The methodology developed in this paper can be extended to a wide range of
configurations involving mixed boundary conditions, provided that the
corresponding dispersion relations can be analysed asymptotically. As a
result, the present work provides both a theoretical framework for the study
of elastic waveguides and a practical numerical tool. Indeed, the explicit
asymptotic formulas obtained here make it possible to approximate high-order
wavenumbers accurately without resorting to costly numerical computations,
thereby significantly accelerating modal calculations.

The approach developed here should also extend naturally to anisotropic plates.
In that setting, the Rayleigh-Lamb equations are replaced by more general
dispersion relations involving the roots of the Christoffel equation~\cite{kuznetsov1}. The
main challenge would then consist in deriving suitable asymptotic expansions
of these roots and of the associated dispersion relations. Since the present
work shows that asymptotic information on the wavenumbers is the key
ingredient for obtaining modal decompositions and well-posedness results, we
expect that a similar strategy could be successfully applied in the
anisotropic setting. This question, however, lies beyond the scope of the
present paper and will be the subject of future investigations.

\section*{Appendix A}

\subsection{Lamb modes with Neumann and fluid boundary conditions}

We recall the expressions of the Lamb modes for Neumann boundary conditions derived in~\cite{bonnetier3}. In the symmetric case, the Lamb modes $(u,t,-s,v)$ are proportional to
\begin{equation}
\left(\begin{array}{c}
ik(q^2-k^2)\sin(qh)\cos(pz)-2ikpq\sin(ph)\cos(qz) \\
2ik\mu(q^2-k^2)p\bigl(-\sin(qh)\sin(pz)+\sin(ph)\sin(qz)\bigr)\\
(q^2-k^2)\bigl((\lambda+2\mu)k^2+\lambda p^2\bigr)\sin(qh)\cos(pz)
-4\mu pq k^2\sin(ph)\cos(qz)\\
-p(q^2-k^2)\sin(qh)\sin(pz)-2k^2p\sin(ph)\sin(qz)
\end{array}\right).
\end{equation}

In the antisymmetric case, they are proportional to
\begin{equation}
\left(\begin{array}{c}
ik(q^2-k^2)\cos(qh)\sin(pz)-2ikpq\cos(ph)\sin(qz) \\
2ik\mu(q^2-k^2)p\bigl(\cos(qh)\cos(pz)-\cos(ph)\cos(qz)\bigr)\\
(q^2-k^2)\bigl((\lambda+2\mu)k^2+\lambda p^2\bigr)\cos(qh)\sin(pz)
-4\mu pq k^2\cos(ph)\sin(qz)\\
p(q^2-k^2)\cos(qh)\cos(pz)+2k^2p\cos(ph)\cos(qz)
\end{array}\right).
\end{equation}

Following the derivation initiated in~\cite{osborne1}, it can be shown that these expressions remain unchanged in the presence of fluid boundary conditions.

\subsection{Lamb modes with Dirichlet boundary conditions}

Following the methodology developed in~\cite{achenbach1}, the Lamb modes $(u,t,-s,v)$ associated with Dirichlet boundary conditions can be derived. In the symmetric case, they are proportional to
\begin{equation}
\left(\begin{array}{l}
q\cos(qh)\cos(pz)-q\cos(qz)\cos(ph) \\
\mu\bigl(-qp\cos(ph)+k^2\sin(qh)\bigr)\sin(pz)
+\mu\bigl(q^2\cos(ph)-k^2\sin(ph)\bigr)\sin(qz) \\
-ik\bigl((\lambda+2\mu)q\cos(ph)-\lambda p\sin(qh)\bigr)\cos(pz)
+ik\bigl((\lambda+2\mu)q\cos(ph)-\lambda q\sin(ph)\bigr)\cos(qz) \\
-ik\sin(qh)\sin(pz)+ik\sin(qz)\sin(ph)
\end{array}\right),
\end{equation}
whereas in the antisymmetric case they are proportional to
\begin{equation}
\left(\begin{array}{l}
-q\sin(qh)\sin(pz)+q\sin(qz)\sin(ph) \\
\mu\bigl(-qp\sin(ph)+k^2\cos(qh)\bigr)\cos(pz)
+\mu\bigl(q^2\sin(ph)-k^2\cos(ph)\bigr)\cos(qz) \\
ik\bigl((\lambda+2\mu)q\sin(ph)-\lambda p\cos(qh)\bigr)\sin(pz)
-ik\bigl((\lambda+2\mu)q\sin(ph)-\lambda q\cos(ph)\bigr)\sin(qz) \\
-ik\cos(qh)\cos(pz)+ik\cos(qz)\cos(ph)
\end{array}\right).
\end{equation}

\bibliographystyle{plain}
\bibliography{biblio}

\end{document}